\documentclass[11pt]{amsart}
\usepackage{latexsym}
\usepackage{amssymb}
\usepackage{amsthm}
\usepackage{tikz}
\usepackage{amscd}
\usepackage[all,cmtip]{xy}

\makeatletter
\@namedef{subjclassname@2010}{\textup{2010} Mathematics Subject Classification}
\makeatother
\setlength\oddsidemargin{.8mm}
\setlength\evensidemargin{.8mm}
\setlength\textheight{19cm}
\setlength\textwidth{14.5cm}

\newtheorem{theorem}{Theorem}[section]
\newtheorem{lemma}[theorem]{Lemma}
\newtheorem{proposition}[theorem]{Proposition}

\newtheorem{corollary}[theorem]{Corollary}

\newcommand{\frk}{\operatorname{frk}}
\newtheorem*{conjecture1}{Conjecture I}
\newtheorem*{conjecture2}{Conjecture II}
\newtheorem*{question}{Question}

\begin{document}
\title{Free 2-rank of symmetry of products of Milnor manifolds}
\author{Mahender Singh}
\address{Indian Institute of Science Education and Research (IISER) Mohali, Sector 81, Knowledge City, S A S Nagar (Mohali), Post Office Manauli, Punjab 140306, India.}
\email{mahender@iisermohali.ac.in}
\keywords{Free rank, Milnor manifold, Leray-Serre spectral sequence, Steenrod algebra.}
\subjclass[2010]{Primary 57S25; Secondary 57S17, 55T10.}

\begin{abstract}
A real Milnor manifold is the non-singular hypersurface of degree $(1,1)$ in the product of two real projective spaces. These manifolds were introduced by Milnor to give generators for the unoriented cobordism algebra, and they admit free actions by elementary abelian 2-groups. In this paper, we obtain some results on the free 2-rank of symmetry of products of finitely many real Milnor manifolds under the assumption that the induced action on mod 2 cohomology is trivial. Similar results are obtained for complex Milnor manifolds which are defined analogously. Here the free 2-rank of symmetry of a topological space is the maximal rank of an elementary abelian 2-group which acts freely on that space.
\end{abstract}

\maketitle

\section{Introduction}
One of the basic problems in the theory of transformation groups is to determine the structure of a group which acts in a specific way on a given topological space. The problem of determining finite groups that can act freely on spheres has been of special interest. A classical result of Smith \cite{Smith} says that; if a group acts freely on a sphere, then all its abelian subgroups are cyclic. Conversely, Swan \cite{Swan} proved that any group satisfying this condition acts freely on a finite complex with the homotopy type of a sphere. Notice that a finite abelian group is cyclic if and only if it does not contain a subgroup of the form $\mathbb{Z}/p \oplus \mathbb{Z}/p$ for any prime $p$. Thus $\mathbb{Z}/p \oplus \mathbb{Z}/p$ cannot act freely on a sphere. However, $\mathbb{Z}/p$ does act freely on a sphere, where the sphere must be odd dimensional for odd $p$. These results motivated the concept of free $p$-rank of symmetry of a topological space $X$ for a prime $p$, introduced in \cite{Adem1}, and defined as
$$\frk_p(X)= \max\{r ~|~ (\mathbb{Z}/p)^r~\textrm{acts freely on}~ X\}.$$
Determining the free $p$-rank of symmetry of a topological space is an interesting problem and has been considered for many spaces. In view of the Smith theory, we have
\begin{displaymath}
\frk_p(\mathbb{S}^n) = \left\{ \begin{array}{ll}
1 & \textrm{if $n$ is odd and $p$ is arbitrary}\\
1 & \textrm{if $n$ is even and $p=2$}\\
0 & \textrm{if $n$ is even and $p>2$.}
\end{array} \right.
\end{displaymath}
The problem of extending this result to products of finitely many spheres has been of great interest to many topologists. Oliver \cite{Oliver} proved that every finite group acts freely on a product of spheres. For a finite group $G$, define 
$$h(G) = \min\{s \mid G ~\textrm{acts freely on}~ \mathbb{S}^{n_1} \times \cdots \times \mathbb{S}^{n_s} \}~\textrm{and}$$
$$r(G)=\max \{t \mid (\mathbb{Z}/p)^t \leq G~\textrm{for some prime}~ p\}.$$
Attempts to extend Smith's result to products of finitely many spheres led to the following conjecture by Benson and Carlson \cite{Benson}.
\begin{conjecture1}
$h(G)=r(G)$ for each finite group $G$.
\end{conjecture1}
Conjecture I implies the following well known conjecture \cite{Adem1, Adem2, Carlsson1} regarding free actions of elementary abelian $p$-groups on products of spheres.
\begin{conjecture2}
$\frk_p(\mathbb{S}^{2n_1+1} \times \cdots \times \mathbb{S}^{2n_k+1} )=k$ for each prime $p$ and integer $k \geq 0$.
\end{conjecture2}

For a single sphere it is simply the result of Smith \cite{Smith}. For products of two spheres it was already proved by Conner \cite{Conner1}, and for products of three spheres it was proved to be true by Heller \cite{Heller}. Carlsson \cite{Carlsson1, Carlsson2} proved the conjecture for products of equidimensional spheres under the assumption of trivial induced action on cohomology. Adem and Browder \cite{Adem1} proved that $\frk_p((\mathbb{S}^n)^k)=k$ with the only remaining cases as $p=2$ and $n=1,3,7$. Some time later,  Yal\c c\i n \cite{Yalcin2} proved that $\frk_2((\mathbb{S}^1)^k)=k$. For an integer $n$, let
\begin{displaymath}
\eta(n) = \left\{ \begin{array}{ll}
0 & \textrm{if $n$ is even}\\
1 & \textrm{if $n$ is odd.}
\end{array} \right.
\end{displaymath}
Then the most  comprehensive result is due to Hanke \cite{Hanke} who proved that; if $p > 3(n_1+\cdots+n_k)$, then
$$\frk_p(\mathbb{S}^{n_1} \times \cdots \times \mathbb{S}^{n_k}) = \eta(n_1)+ \cdots+\eta(n_k).$$
In a very recent work \cite{Okutan}, Okutan and Yal\c c\i n proved the Conjecture II in the case where the dimensions $\{n_i\}$ are high compared to all the differences $|n_i - n_j |$ between the dimensions.

Recall that $\mathbb{Z}/2$ is the only finite group that can act freely on an even dimensional sphere. This result was extended to products of finitely many even dimensional spheres by Cusick \cite{Cusick2, Cusick4}. He proved that; if $G$ is a finite group acting freely on $\mathbb{S}^{2n_1} \times \cdots \times \mathbb{S}^{2n_k}$ with trivial induced action on mod 2 cohomology, then $G\cong (\mathbb{Z}/2)^r$ for some $r\leq k$. In particular, when the induced action on mod 2 cohomology is trivial, then $\frk_2(\mathbb{S}^{2n_1} \times \cdots \times \mathbb{S}^{2n_k}) = k$.

Although a lot of work has been done for products of spheres, free $p$-rank of symmetry of many other interesting spaces is still not known. An immediate extension of the problem from spheres and their products is to consider spherical space forms and their products. Let $p$ be an odd prime. A lens space $L_p^{2n-1}$ is an odd dimensional spherical space form obtained as the quotient of the standard $\mathbb{Z}/p$ action on $\mathbb{S}^{2n-1}$. Allday \cite{Allday2} conjectured that $$\frk_p(L_p^{2n_1-1} \times \cdots \times L_p^{2n_k-1}) = k.$$ The equidimensional case of the conjecture was proved by Yal\c c\i n in \cite{Yalcin1}. The general case of the conjecture seems still open.

The problem of computing the free 2-rank of symmetry of products of projective spaces was considered by Cusick. It is known that $\mathbb{C}P^n$ admits a free action by a finite group if and only if $n$ is odd, in which case the only possible group is $\mathbb{Z}/2$. Cusick \cite{Cusick5} proved that
$$\frk_2(\mathbb{C}P^{n_1} \times \cdots \times \mathbb{C}P^{n_k})= \eta(n_1)+ \cdots+\eta(n_k).$$

For an integer $n$, let
\begin{displaymath}
\theta(n) = \left\{ \begin{array}{ll}
0 & \textrm{if $n$ is even}\\
1 & \textrm{if $n \equiv 1 \mod 4$}\\
2 & \textrm{if $n \equiv 3 \mod 4$.}
\end{array} \right.
\end{displaymath}
Cusick \cite{Cusick1} investigated the real case and conjectured that; if the induced action on mod 2 cohomology is trivial, then $$\frk_2(\mathbb{R}P^{n_1} \times \cdots \times \mathbb{R}P^{n_k})= \theta(n_1)+ \cdots+\theta(n_k).$$
He proved the conjecture when $n_i \not\equiv 3 \mod 4$ for each $1 \leq i \leq k$. Adem and Yal\c c\i n \cite{Adem3} proved the conjecture for  products of equidimensional real projective spaces without the assumption of trivial induced action on mod 2 cohomology. Later, Yal\c c\i n \cite{Yalcin2} proved the conjecture when $n_i$ is odd for each $1 \leq i \leq k$. The general case of the conjecture is still open.

A product of two projective spaces can be considered as the total space of a trivial projective space bundle over a projective space. It is an interesting question to determine the free rank of symmetry of the total space of a twisted projective space bundle over a projective space. Milnor manifolds are fundamental examples of such spaces. These manifolds were introduced by Milnor \cite{Milnor} in search for generators for the unoriented cobordism algebra, and are non-singular hypersurfaces of degree $(1,1)$ in the product of two projective spaces (see section \ref{sec2} for detailed definitions). The purpose of this paper is to obtain some results regarding the free 2-rank of symmetry of products of finitely many Milnor manifolds. We will consider both real and complex Milnor manifolds. We will prove our results by adopting Cusick's method which depends on some results of Carlsson.

Let $\mathbb{R}H_{r,s}$ and $\mathbb{C}H_{r,s}$ denote the real and the complex Milnor manifold respectively (see section \ref{sec2} for notations). Let $X \simeq_2 Y$ mean that $X$ and $Y$ are topological spaces having isomorphic mod 2 cohomology algebra. Then the main results are as follows.

\begin{theorem}\label{thm1.1}
Let $(\mathbb{Z}/2)^r$ act freely on a finite dimensional CW-complex $X \simeq_2 \prod_{i=1}^n \mathbb{R}H_{r_i,s_i}$ with $1 \leq s_i \leq r_i$ for each $1 \leq i \leq n$. Suppose that the induced action on mod $2$ cohomology of $X$ is trivial. Then
\begin{enumerate}
\item $r \leq 2\big(\eta(s_1)+\eta(r_1)+\cdots+\eta(s_n)+\eta(r_n)\big)$
\item $r \leq \big(\eta(s_1)+\eta(r_1)+\cdots+\eta(s_n)+\eta(r_n)\big)$ if $s_i,r_i \not\equiv 3 \mod 4$ for each $1 \leq i \leq n$.
\end{enumerate}
\end{theorem}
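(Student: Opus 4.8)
The plan is to follow Cusick's method, which reduces the problem to understanding the mod 2 cohomology of a real Milnor manifold as a module over the Steenrod algebra, and then to apply Carlsson's theorem bounding the free 2-rank in terms of the structure of the cohomology. Let me think through what I would need.

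First, I need the cohomology of $\mathbb{R}H_{r,s}$. The Milnor manifold is a hypersurface of degree $(1,1)$ in $\mathbb{R}P^r \times \mathbb{R}P^s$. Its mod 2 cohomology is known — it's generated by classes $c$ (pulled back from $\mathbb{R}P^r$, degree 1) and $d$ (pulled back from $\mathbb{R}P^s$, degree 1), with relations coming from the cohomology of the product and the hypersurface relation. Specifically $H^*(\mathbb{R}H_{r,s};\mathbb{Z}/2) = \mathbb{Z}/2[c,d]/(c^{r+1}, \sum_{i=0}^{s} c^i d^{s-i})$ or something similar — I'd need to pin down the exact relation, which comes from the Gysin/defining-equation structure. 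The key facts I need are the Poincaré polynomial and the action of $Sq^1$ (the Bockstein) on the generators.

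Now Cusick's method. The underlying tool (from Carlsson) says roughly: if $(\mathbb{Z}/2)^r$ acts freely on $X$, and one looks at the Leray–Serre spectral sequence of the Borel construction $X \to X_G \to BG$, then triviality of the induced action plus freeness forces the differentials to kill all of $H^*(X;\mathbb{Z}/2)$, and the number of generators that must be "hit" by differentials bounds $r$. More precisely, the relevant quantity is how many algebra generators of $H^*(X;\mathbb{Z}/2)$ survive to be detected, weighted by parity considerations tied to $Sq^1$. The function $\theta$ from the introduction encodes exactly this for a single $\mathbb{R}P^n$: an even $n$ contributes $0$, $n\equiv 1\bmod 4$ contributes $1$, $n\equiv 3 \bmod 4$ contributes $2$. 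The appearance of $\eta(s)+\eta(r)$ rather than $\theta$-type terms suggests that each Milnor manifold contributes according to the parities of $r$ and $s$ separately.

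So the key steps, in order, would be: (1) write down $H^*(\mathbb{R}H_{r,s};\mathbb{Z}/2)$ explicitly and compute the Bockstein $Sq^1$ on $c$ and $d$ (this gives $Sq^1 c = c^2$, $Sq^1 d = d^2$ as usual for $\mathbb{R}P$ classes, modulo the hypersurface relation). (2) Identify the top-dimensional class and the Poincaré duality structure, and determine which powers of $c,d$ are nonzero — this controls the parity inputs $\eta(r_i),\eta(s_i)$. (3) For the product $X \simeq_2 \prod_i \mathbb{R}H_{r_i,s_i}$, use the Künneth theorem to assemble the total cohomology and its $Sq^1$-module structure. (4) Apply Carlsson's bound: the free 2-rank is at most the total number of odd-degree generators detected, giving part (1) with the factor of $2$. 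For part (2), when $r_i, s_i \not\equiv 3 \bmod 4$, a sharper analysis of higher Steenrod operations (or of the $Sq^1$-homology) eliminates the extra factor, halving the bound — this is exactly the phenomenon that distinguishes $\theta(n)=1$ from $\theta(n)=2$ in Cusick's real projective space result.

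The main obstacle I anticipate is step (2)–(3): correctly tracking the Steenrod module structure of $H^*(\mathbb{R}H_{r,s};\mathbb{Z}/2)$ through the hypersurface relation and then through the Künneth product, because the relation $\sum c^i d^{s-i}=0$ mixes the two generators and complicates the computation of $Sq^1$-homology compared to the clean product-of-projective-spaces case. In particular, verifying that each factor contributes precisely $\eta(r_i)+\eta(s_i)$ (and not some coupled quantity) requires care: one must check that the odd-dimensional generators $c,d$ behave independently enough under the Steenrod action despite being linked by the defining relation. Once the module structure is correct, plugging into Carlsson's theorem and the parity refinement for the $\not\equiv 3\bmod 4$ case should be routine, following Cusick's template closely.
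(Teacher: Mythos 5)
You have correctly identified the general framework (the Borel fibration, the Leray--Serre spectral sequence, Cusick's method resting on results of Carlsson, and the role of parity and Steenrod operations), but your proposal never states the mechanism that actually produces the two bounds, and the one quantitative tool you do invoke does not exist in the form you state. The paper's argument runs as follows. Since the action on $H^*(X)$ is trivial, $E_2^{*,*}\cong H^*(B_G)\otimes H^*(X)$, and by a theorem of Allday--Puppe the spectral sequence of $X\hookrightarrow X_G\to B_G$ cannot degenerate at $E_2$; one sets $d_2(1\otimes a_i)=u_i\otimes 1$ and $d_2(1\otimes b_i)=v_i\otimes 1$ and regards $u_i,v_i\in H^2(B_G)$ as quadratic forms in $r$ variables over $\mathbb{Z}/2$. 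Two facts then do all the work. First, these forms have no nontrivial common zero: a common zero would give, by Carlsson's subgroup criterion, an inclusion $j:\mathbb{Z}/2\hookrightarrow(\mathbb{Z}/2)^r$ with $j^*(u_i)=j^*(v_i)=0$ for all $i$; restricting the action to this subgroup makes every differential of the restricted spectral sequence vanish, so the cohomology of the Borel construction is infinite dimensional, contradicting the finite-dimensionality forced by a free action on a finite-dimensional complex. Second, the relations $a_i^{s_i+1}=0$ and $b_i^{r_i+1}=0$ together with the derivation property of $d_2$ force $u_i=0$ when $s_i$ is even and $v_i=0$ when $r_i$ is even (the coefficient $r_i+1$ is odd exactly when $r_i$ is even); this is precisely where $\eta$ enters, leaving at most $\eta(s_1)+\eta(r_1)+\cdots+\eta(s_n)+\eta(r_n)$ nonzero forms.

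Given these two facts, part (1) follows from Greenberg's theorem that $n$ homogeneous forms of degree $m$ in more than $mn$ variables over $\mathbb{Z}/2$ have a nontrivial common zero: the factor $2$ is the \emph{degree} of the quadratic forms $u_i,v_i$, not, as you write, ``the total number of odd-degree generators detected'' --- there is no theorem of Carlsson bounding the free rank by a count of odd-degree cohomology generators, and your step (4) as stated does not yield the inequality. Part (2) is likewise not a ``sharper analysis of $Sq^1$-homology'' of $H^*(X)$: its real content is a lemma showing that when $s_i,r_i\not\equiv 3 \bmod 4$ the ideal $J=\langle u_1,v_1,\dots,u_n,v_n\rangle\subset H^*(B_G)$ is invariant under the Steenrod algebra (using that the transgression commutes with $Sq^1$, the vanishing $b_i^{4m+2}=0$ when $r_i=4m+1$, and injectivity of multiplication by $1\otimes b_i^{4m}$), after which Carlsson's theorem on Steenrod-invariant ideals with no nontrivial common zero gives $r\le$ the number of forms, with no degree factor. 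This is also exactly where the hypothesis $\not\equiv 3\bmod 4$ is used: if $r_i=4m+3$ the analogous computation reads $0=0$ and gives no control on $Sq^1(v_i)$. Consequently the obstacle you anticipate --- tracking $Sq^1$ on $H^*(X)$ through the hypersurface relation and the K\"{u}nneth product --- is misdirected: the Steenrod operations one must control act on the transgression images inside $H^*(B_G)$, and the hypersurface relation enters only through the identity $b_i^{r_i+1}=0$, which the paper verifies directly from the presentation of the cohomology ring.
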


\begin{theorem}\label{thm1.2}
Let $(\mathbb{Z}/2)^r$ act freely on a finite dimensional CW-complex $X \simeq_2 \prod_{i=1}^n \mathbb{C}H_{r_i,s_i}$ with $1 \leq s_i \leq r_i$ for each $1 \leq i \leq n$. Suppose that the induced action on mod $2$ cohomology of $X$ is trivial. Then
\begin{enumerate}
\item $r \leq 3\big(\eta(s_1)+\eta(r_1)+\cdots+\eta(s_n)+\eta(r_n)\big)$
\item $r \leq (\eta(s_1)+\eta(r_1)+\cdots+\eta(s_n)+\eta(r_n)\big)$ if $s_i,r_i \not\equiv 3 \mod 4$ for each $1 \leq i \leq n$.
\end{enumerate}
\end{theorem}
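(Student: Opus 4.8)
The plan is to follow the strategy of Theorem~\ref{thm1.1} verbatim, replacing the degree-one generators of the real Milnor manifold by the degree-two generators of the complex one and the operation $\operatorname{Sq}^1$ by $\operatorname{Sq}^2$. First I would record the mod $2$ cohomology ring: for $1\le s\le r$ the projection $\mathbb{C}H_{r,s}\to \mathbb{C}P^s$ exhibits $\mathbb{C}H_{r,s}$ as a $\mathbb{C}P^{r-1}$-bundle, so by Leray--Hirsch $H^*(\mathbb{C}H_{r,s};\mathbb{Z}/2)$ is the free module over $\mathbb{Z}/2[b]/(b^{s+1})$ on $1,a,\dots,a^{r-1}$, with $|a|=|b|=2$ and fiber relation $\sum_{i=0}^{r}a^{i}b^{r-i}=0$. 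Taking tensor products gives $H^*(X;\mathbb{Z}/2)\cong\bigotimes_{i}H^*(\mathbb{C}H_{r_i,s_i};\mathbb{Z}/2)$, a ring concentrated in even degrees. The crucial point is the Steenrod structure: since $a,b$ are reductions of integral classes, $\operatorname{Sq}^1=0$, while the Cartan formula gives $\operatorname{Sq}^2(a^ib^j)=i\,a^{i+1}b^j+j\,a^ib^{j+1}$. Because $\operatorname{Sq}^1=0$, the Adem relation $\operatorname{Sq}^2\operatorname{Sq}^2=\operatorname{Sq}^3\operatorname{Sq}^1$ shows that $\operatorname{Sq}^2$ is a differential, and the displayed formula is identical, after doubling degrees, to the one governing $\operatorname{Sq}^1$ on the real Milnor manifold. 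Hence the $\operatorname{Sq}^2$-Margolis homology of $H^*(X;\mathbb{Z}/2)$ coincides, up to the degree shift, with the $\operatorname{Sq}^1$-Margolis homology computed in the proof of Theorem~\ref{thm1.1}, and its nontrivial part is again counted by $\eta(s_i)+\eta(r_i)$.

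Next I would set up the Leray--Serre spectral sequence of the Borel fibration $X\hookrightarrow X_G\to BG$ for $G=(\mathbb{Z}/2)^r$. Triviality of the induced action gives $E_2=H^*(BG;\mathbb{Z}/2)\otimes H^*(X;\mathbb{Z}/2)$ with $H^*(BG;\mathbb{Z}/2)=\mathbb{Z}/2[t_1,\dots,t_r]$, and freeness forces $H^*(X_G;\mathbb{Z}/2)\cong H^*(X/G;\mathbb{Z}/2)$ to be finite dimensional, so the polynomial ring must be annihilated by the differentials emanating from $H^*(X;\mathbb{Z}/2)$. Since $H^{\mathrm{odd}}(X;\mathbb{Z}/2)=0$, one has $d_2=0$ and the first nonzero transgression is $d_3$, sending each degree-two generator to a cubic in the $t_j$; the Kudo transgression theorem then propagates this along the $2$-power tower through $d_5,d_9,\dots$ via iterated $\operatorname{Sq}^2$, exactly as $\operatorname{Sq}^1$ does in the real case. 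This is precisely the situation to which Carlsson's results, in the form used by Cusick, apply, and I would invoke them to bound $r$ by a multiple of the number of surviving Margolis classes.

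For part (2) I would argue exactly as Cusick does for real projective spaces when no exponent is $\equiv 3 \pmod{4}$. Under the hypothesis $s_i,r_i\not\equiv 3\pmod{4}$ the $\operatorname{Sq}^2$-towers terminate cleanly, each odd parameter contributing a single algebraically independent transgression element, and Carlsson's bound collapses to $r\le \eta(s_1)+\eta(r_1)+\cdots+\eta(s_n)+\eta(r_n)$, which is the same conclusion as in Theorem~\ref{thm1.1}(2).

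The genuinely harder part is (1). When some exponent is $\equiv 3\pmod{4}$ the sharp transgression count fails -- as already visible from the fact that the $\operatorname{Sq}^2$-Margolis homology cannot distinguish $1$ from $3$ modulo $4$ -- and one must instead use a cruder application of Carlsson's results that accounts for the full degree-two generator. This is where the coefficient $3$ enters in place of the coefficient $2$ of Theorem~\ref{thm1.1}(1): a surviving class in degree $d$ can manufacture up to $d+1$ algebraically independent elements through the higher differentials produced by the iterated $\operatorname{Sq}^2$'s, and here $d=2$. The main obstacle will be to make this last count rigorous: I must control exactly how many algebraically independent transgression images the tower of a single $\equiv 3\pmod{4}$ parameter can supply, and verify that the coupling between the $a$- and $b$-towers through the fiber relation $\sum_{i=0}^{r} a^i b^{r-i}=0$ does not spoil their independence. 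I expect this bookkeeping, rather than any new idea, to be the technical heart of the argument, yielding $r\le 3\big(\eta(s_1)+\eta(r_1)+\cdots+\eta(s_n)+\eta(r_n)\big)$.
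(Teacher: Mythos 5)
Your skeleton (Borel fibration, $d_2=0$ by evenness of $H^*(X)$, nonzero transgressions $d_3(1\otimes g_i)=x_i\otimes 1$, $d_3(1\otimes h_i)=y_i\otimes 1$, appeal to Carlsson-type results) matches the paper's, but the two steps that actually produce the bounds are missing or misattributed, so this is not yet a proof. The engine of both parts is never stated: one must show that the cubic forms $x_1,y_1,\dots,x_n,y_n\in H^3(B_G)$ have \emph{no} nontrivial common zero in $(\mathbb{Z}/2)^r$. The paper does this via proposition \ref{prop4.5}: a common zero yields a subgroup inclusion $j:\mathbb{Z}/2\hookrightarrow G$ with $j^*(x_i)=j^*(y_i)=0$ for all $i$; restricting the (still free) action to this $\mathbb{Z}/2$ and using naturality of the Leray--Serre spectral sequence forces $\overline{d}_3(1\otimes g_i)=\overline{d}_3(1\otimes h_i)=0$, hence $\overline{E}_2^{*,*}=\overline{E}_\infty^{*,*}$, making $H^*(X_{\mathbb{Z}/2})$ infinite dimensional and contradicting proposition \ref{prop4.4}. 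Your remark that ``freeness forces $H^*(X_G)$ to be finite dimensional, so the polynomial ring must be annihilated by the differentials'' gestures at this, but you never convert it into the common-zero statement on which both (1) and (2) run; likewise your proposal defers to a ``$Sq^1$-Margolis homology computed in the proof of Theorem \ref{thm1.1}'' that the paper never performs.

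Second, your explanation of the coefficient $3$ is wrong, and you have inverted where the difficulty lies. Part (1) is the routine part: once the forms have no common zero, Greenberg's elementary result (proposition \ref{prop4.7}: $n$ forms of degree $m$ over $\mathbb{Z}/2$ in more than $mn$ variables always have a nontrivial common zero) immediately gives $r\le 3\big(\eta(s_1)+\eta(r_1)+\cdots+\eta(s_n)+\eta(r_n)\big)$, because transgressions of degree-two classes are cubics and the number of possibly nonzero forms is at most $\sum_i\big(\eta(s_i)+\eta(r_i)\big)$ (even $s_i$ or $r_i$ forces $x_i=0$ resp.\ $y_i=0$, using $g_i^{s_i+1}=0$, $h_i^{r_i+1}=0$, the derivation property of $d_3$, and injectivity of multiplication by $1\otimes h_i^{r_i}$). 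No Steenrod operations, no residues mod $4$, and no count of ``how many independent elements a tower can supply'' enters part (1): the $3$ is simply the degree of the forms in Greenberg's bound, exactly as the $2$ in theorem \ref{thm1.1}(1) is the degree of the quadratic forms there. The genuine work is in part (2): proving that the ideal $L=\langle x_1,y_1,\dots,x_n,y_n\rangle$ is invariant under the Steenrod algebra (the paper's lemma \ref{lem6.2}), which is exactly where $s_i,r_i\not\equiv 3 \mod 4$ is used --- for $r_i=4m+1$ one exploits $h_i^{4m+2}=0$, the commutation of transgression with $Sq^2$, and injectivity of multiplication by $1\otimes h_i^{4m}$ to conclude $Sq^2(y_i\otimes 1)=d_3(z)\in L$, an argument that fails for $r_i\equiv 3 \mod 4$. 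With invariance plus the no-common-zero statement, Carlsson's proposition \ref{prop4.6} yields $r\le \eta(s_1)+\eta(r_1)+\cdots+\eta(s_n)+\eta(r_n)$. Your ``towers terminate cleanly'' remark points in this direction but never establishes invariance of the ideal, which --- not the bookkeeping you describe for part (1) --- is the technical heart of the theorem.
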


\section{Definition and cohomology of Milnor manifolds}\label{sec2}
Milnor manifolds were introduced by Milnor \cite{Milnor} in search for generators for the unoriented cobordism algebra. Let $r$ and $s$ be integers such that $0 \leq s \leq r$. A real Milnor manifold, denoted by $\mathbb{R}H_{r,s}$, is the non-singular hypersurface of degree $(1,1)$ in the product $\mathbb{R}P^r \times \mathbb{R}P^s$. It is a $(s+r-1)$ dimensional closed
smooth manifold, and can also be described in terms of homogeneous coordinates of real projective spaces as $$\mathbb{R}H_{r,s}=\Big\{\big([x_0,\dots,x_r], [y_0,\dots,y_s]\big) \in \mathbb{R}P^r \times \mathbb{R}P^s|~ x_0y_0+\dots+x_sy_s=0\Big\}.$$
Equivalently, a real Milnor manifold can be defined as the total space of the fiber bundle $$\mathbb{R}P^{r-1} \stackrel{i}{\hookrightarrow} \mathbb{R}H_{r,s} \stackrel{\pi}{\longrightarrow} \mathbb{R}P^{s}.$$ This is actually the projectivization of the vector bundle $$\mathbb{R}^r \hookrightarrow E^{\perp} \longrightarrow \mathbb{R}P^{s},$$ which is the orthogonal complement in $\mathbb{R}P^s \times \mathbb{R}^{r+1}$ of the canonical line bundle $$\mathbb{R} \hookrightarrow E \longrightarrow \mathbb{R}P^{s},$$ where $E=\Big\{\big([x],y\big) \in \mathbb{R}P^s \times \mathbb{R}^{r+1}|~ y \in [x] \Big\}.$

These manifolds are important as the unoriented cobordism algebra of smooth manifolds is generated by the cobordism classes of $\mathbb{R}P^k$ and $\mathbb{R}H_{r,s}$ \cite[Lemma 1]{Milnor}. Therefore, determining their various invariants is an important problem.

In a similar way, a complex Milnor manifold, denoted by $\mathbb{C}H_{r,s}$, is the non-singular hypersurface of degree $(1,1)$ in the product $\mathbb{C}P^r \times \mathbb{C}P^s$. It is a $2(s+r-1)$ dimensional closed
smooth manifold, and can also be described in terms of homogeneous coordinates as
$$\mathbb{C}H_{r,s}=\Big\{\big([z_0,\dots,z_r],[w_0,\dots,w_s]\big)\in \mathbb{C}P^r \times \mathbb{C}P^s|~ z_0\overline{w}_0+\dots+z_s\overline{w}_s=0\Big\}.$$ Equivalently, as in the real case, a complex Milnor manifold can be defined as the total space of the fiber bundle $$\mathbb{C}P^{r-1} \stackrel{i}{\hookrightarrow} \mathbb{C}H_{r,s} \stackrel{\pi}{\longrightarrow} \mathbb{C}P^{s}.$$ It is known due to Conner and Floyd \cite[p.63]{Conner2} that $\mathbb{C}H_{r,s}$ is unoriented cobordant to $\mathbb{R}H_{r,s} \times \mathbb{R}H_{r,s}$.

These manifolds have been well studied in the past. See for example \cite{Galvez, Kamata, Mukerjee} for recent results. Their cohomology algebra is also well known, and we will use it in the proofs of our main theorems.

\begin{lemma}\label{lem2.1}\cite{Bukhshtaber, Mukerjee}
Let $0 \leq s \leq r$. Then the mod $2$ cohomology algebra of a Milnor manifold is given as follows:
\begin{enumerate}
\item $H^*(\mathbb{R}H_{r,s}; \mathbb{Z}/2) \cong \mathbb{Z}/2[a,b]/\langle a^{s+1}, b^r+ab^{r-1}+ \cdots+a^sb^{r-s} \rangle$,\\
where $a$ and $b$ are homogeneous elements of degree one each.
\item $H^*(\mathbb{C}H_{r,s}; \mathbb{Z}/2) \cong \mathbb{Z}/2[g,h]/\langle g^{s+1},h^r+gh^{r-1}+\cdots+g^sh^{r-s} \rangle$,\\
where $g$ and $h$ are homogeneous elements of degree two each.
\end{enumerate}
\end{lemma}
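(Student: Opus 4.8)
The plan is to compute both cohomology algebras uniformly from the fibre bundle descriptions recalled above, via the projectivised-bundle formula, which is an instance of the Leray--Hirsch theorem. I treat the real case first; the complex case is formally identical after replacing Stiefel--Whitney classes by mod $2$ reductions of Chern classes and doubling all degrees. Recall that $\mathbb{R}H_{r,s} = \mathbb{P}(E^{\perp})$ is the projectivisation of the rank $r$ bundle $E^{\perp}$ over $\mathbb{R}P^{s}$, where $E$ is the canonical line bundle and $E \oplus E^{\perp}$ is the trivial bundle of rank $r+1$. Write $a \in H^{1}(\mathbb{R}P^{s};\mathbb{Z}/2)$ for the generator, so that $H^{*}(\mathbb{R}P^{s};\mathbb{Z}/2) = \mathbb{Z}/2[a]/\langle a^{s+1}\rangle$, and let $b = w_{1}(L) \in H^{1}(\mathbb{R}H_{r,s};\mathbb{Z}/2)$, where $L$ is the tautological line bundle over $\mathbb{P}(E^{\perp})$. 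Since $1, b, \dots, b^{r-1}$ restrict to a $\mathbb{Z}/2$-basis of $H^{*}(\mathbb{R}P^{r-1};\mathbb{Z}/2)$ on each fibre, the Leray--Hirsch theorem shows that $H^{*}(\mathbb{R}H_{r,s};\mathbb{Z}/2)$ is a free module over $H^{*}(\mathbb{R}P^{s};\mathbb{Z}/2)$ with basis $1,b,\dots,b^{r-1}$; equivalently, the Leray--Serre spectral sequence of $\pi$ collapses at $E_{2}$, because the fibre classes are restrictions of the global class $b$.

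To obtain the multiplicative relation I compute the total Stiefel--Whitney class of $E^{\perp}$. From $w(E)\,w(E^{\perp}) = w(\mathbb{R}P^{s}\times\mathbb{R}^{r+1}) = 1$ and $w(E) = 1 + a$, I get
\[
w(E^{\perp}) = (1+a)^{-1} = 1 + a + a^{2} + \cdots + a^{s}
\]
in $\mathbb{Z}/2[a]/\langle a^{s+1}\rangle$, so that $w_{i}(E^{\perp}) = a^{i}$ for $0 \le i \le s$ and $w_{i}(E^{\perp}) = 0$ for $i > s$. The projectivised-bundle formula then supplies the single defining relation $\sum_{i=0}^{r} w_{i}(E^{\perp})\, b^{r-i} = 0$, that is $b^{r} + a b^{r-1} + \cdots + a^{s} b^{r-s} = 0$, which together with $a^{s+1} = 0$ gives exactly the presentation asserted in part (1).

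The complex case follows the same three steps. Here $\mathbb{C}H_{r,s} = \mathbb{P}(E^{\perp})$ for the rank $r$ complex bundle $E^{\perp}$ over $\mathbb{C}P^{s}$, the relevant classes $g \in H^{2}(\mathbb{C}P^{s};\mathbb{Z}/2)$ and $h \in H^{2}(\mathbb{C}H_{r,s};\mathbb{Z}/2)$ both have degree two, and the identity $c(E)\,c(E^{\perp}) = 1$ with $c(E) \equiv 1 + g \pmod 2$ yields $1 + g + \cdots + g^{s}$ for the mod $2$ total Chern class of $E^{\perp}$. Leray--Hirsch again gives the free module structure with basis $1,h,\dots,h^{r-1}$, and the projectivised-bundle formula produces $h^{r} + g h^{r-1} + \cdots + g^{s} h^{r-s} = 0$, establishing part (2).

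The step requiring the most care is not the characteristic class computation, which is routine, but the passage from the free module structure supplied by Leray--Hirsch to the precise multiplicative relation: one must verify that $b^{r}$ (respectively $h^{r}$) is expressed by exactly the combination $\sum_{i} w_{i}(E^{\perp})\, b^{r-i}$ (respectively its mod $2$ Chern analogue). This is the content of the projectivised-bundle theorem, and it can be derived from the observation that the tautological inclusion $L \hookrightarrow \pi^{*}E^{\perp}$ is a nowhere-vanishing section of $L^{*}\otimes \pi^{*}E^{\perp}$; hence the top class $w_{r}(L^{*}\otimes \pi^{*}E^{\perp}) = \sum_{i} w_{i}(E^{\perp})\, b^{r-i}$ vanishes, which is precisely the asserted relation. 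In the complex case one argues identically with the top Chern class reduced mod $2$, using that $c_{1}(L^{*}) \equiv c_{1}(L) \pmod 2$.
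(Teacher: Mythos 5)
Your proof is correct. Note that the paper supplies no proof of Lemma \ref{lem2.1} at all --- it quotes the computation from \cite{Bukhshtaber, Mukerjee} --- so there is no internal argument to compare against; your derivation is the standard projective-bundle-theorem proof, exactly in line with the bundle description of $\mathbb{R}H_{r,s}$ and $\mathbb{C}H_{r,s}$ that the paper recalls in Section \ref{sec2}: Leray--Hirsch gives freeness over the base with basis $1,b,\dots,b^{r-1}$, Whitney duality gives $w(E^{\perp})=(1+a)^{-1}=1+a+\cdots+a^{s}$, and the nowhere-vanishing section of $L^{*}\otimes\pi^{*}E^{\perp}$ forces the single monic relation, which pins down the algebra structure. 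You also handle the two mod $2$ subtleties correctly, namely that $w_{1}(L)=w_{1}(L^{*})$ and $c_{1}(L)\equiv c_{1}(L^{*}) \pmod 2$, so the choice of tautological versus dual line bundle is immaterial.
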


Note that $\mathbb{R}H_{r,0}= \mathbb{R}P^{r-1}$ and $\mathbb{C}H_{r,0}= \mathbb{C}P^{r-1}$. Since the free 2-rank of symmetry of products of projective spaces has already been considered by Cusick \cite{Cusick1}, we henceforth assume that $1 \leq s \leq r$.
\bigskip

\section{Examples of free actions of elementary abelian 2-groups}\label{sec3}
Just like projective spaces, Milnor manifolds also admit free actions by elementary abelian 2-groups.

\subsection{The real case}
First we construct free actions on $\mathbb{R}H_{r,s}$ for various values of $s$ and $r$. 

\subsubsection{When $s=r$}\label{sec3.1.1}
The involution on $\mathbb{R}P^s \times \mathbb{R}P^s$ given by $$\big([x_0,\dots,x_s], [y_0,\dots,y_s]\big) \longmapsto \big([y_0,\dots,y_s], [x_0,\dots,x_s]\big)$$ restricts to a free involution $A:\mathbb{R}H_{s,s} \longrightarrow \mathbb{R}H_{s,s}$.

\subsubsection{When $s,r \equiv 1 \mod 4$}\label{sec3.1.2}
Let $n=s,r$ and $S_1:\mathbb{R}P^n \longrightarrow \mathbb{R}P^n$ be the free involution given by $$[x_0,x_1,\dots, x_{n-1},x_n] \longmapsto [-x_1,x_0,\dots, -x_n,x_{n-1}].$$ Then the restriction of $S_1 \times S_1:\mathbb{R}P^r \times \mathbb{R}P^s \longrightarrow \mathbb{R}P^r \times \mathbb{R}P^s$ on $\mathbb{R}H_{r,s}$ gives a free involution $$A_1:\mathbb{R}H_{r,s} \longrightarrow \mathbb{R}H_{r,s}.$$

\subsubsection{When $s,r \equiv 3 \mod 4$}\label{sec3.1.3}
First notice that the free involution $A_1$ is also defined in this case. Let $n=s,r$ and $S_2:\mathbb{R}P^n \longrightarrow \mathbb{R}P^n$ be the free involution given by $$[x_0,x_1,x_2,x_3,\dots,x_{r-3},x_{r-2},x_{r-1},x_r]\longmapsto [-x_2,x_3,x_0,-x_1,\dots,-x_{r-1},x_r,x_{r-3},-x_{r-2}].$$ Then the restriction of $S_2 \times S_2:\mathbb{R}P^r \times \mathbb{R}P^s \longrightarrow \mathbb{R}P^r \times \mathbb{R}P^s$ on $\mathbb{R}H_{r,s}$ gives a free involution $$A_2:\mathbb{R}H_{r,s} \longrightarrow \mathbb{R}H_{r,s}.$$ Notice that $A_1 \neq A_2$, $A_1A_2=A_2A_1$ and  $A_1A_2$ acts freely on $\mathbb{R}H_{r,s}$. Therefore, $(\mathbb{Z}/2)^2$ acts freely on $\mathbb{R}H_{r,s}$ when $s,r \equiv 3 \mod 4$.

\subsection{The complex case}\label{sec3.2}
Now we construct free actions on $\mathbb{C}H_{r,s}$ for various values of $s$ and $r$. 

\subsubsection{When $s=r$}\label{sec3.2.1}
The involution on $\mathbb{C}P^s \times \mathbb{C}P^s$ given by $$\big([z_0,\dots,z_s], [w_0,\dots,w_s]\big) \longmapsto \big([w_0,\dots,w_s], [z_0,\dots,z_s] \big)$$ restricts to a free involution $B:\mathbb{C}H_{s,s} \longrightarrow \mathbb{C}H_{s,s}$.

\subsubsection{When both $s$ and $r$ are odd}\label{sec3.2.2}
It is known that $\mathbb{C}P^n$ admits a free action by a finite group if and only if $n$ is odd and in that case the only possible group is $\mathbb{Z}/2$. Let $n=s,r$ and $T_1:\mathbb{C}P^n  \longrightarrow \mathbb{C}P^n$ be the free involution given by $$[z_0,z_1,\dots, z_{n-1},z_n] \longmapsto [-\overline{z}_1,\overline{z}_0,\dots, -\overline{z}_n,\overline{z}_{n-1}].$$ Then the restriction of $T_1 \times T_1:\mathbb{C}P^r \times \mathbb{C}P^s \longrightarrow \mathbb{C}P^r \times \mathbb{C}P^s$ on $\mathbb{C}H_{r,s}$ gives a free involution $$B_1:\mathbb{C}H_{r,s} \longrightarrow \mathbb{C}H_{r,s}.$$

\subsubsection{When $s$ is odd and $r$ is even}\label{sec3.2.3}
Let $T_2:\mathbb{C}P^r  \longrightarrow \mathbb{C}P^r$ be the involution given by $$[z_0,z_1,\dots,z_{r-2}, z_{r-1},z_r] \longmapsto [-\overline{z}_1,\overline{z}_0,\dots, -\overline{z}_{r-1},\overline{z}_{r-2},\iota z_r],$$  where $\iota^2=-1$. Notice that $T_2$ is not a free involution. But the restriction of $T_2 \times T_1:\mathbb{C}P^r \times \mathbb{C}P^s \longrightarrow \mathbb{C}P^r \times \mathbb{C}P^s$ on $\mathbb{C}H_{r,s}$ gives a free involution $$B_2:\mathbb{C}H_{r,s} \longrightarrow \mathbb{C}H_{r,s}.$$

\section{Preliminary results}
Here we recall some facts that we will use in this paper. We refer the reader to \cite{Allday1, Borel, Mccleary} for details on the cohomology theory of transformation groups and spectral sequences. We refer to \cite{Mosher} for basic properties of Steenrod algebra. Throughout we will use cohomology with $\mathbb{Z}/2$ coefficients and we will suppress it from cohomology notation. All spaces under consideration will be finite dimensional CW-complexes. More generally, we can also consider paracompact spaces of finite cohomological dimension or finitistic spaces (which includes paracompact spaces of finite covering dimension and compact Hausdorff spaces).

All group actions under consideration are assumed to be continuous. Let $G$ be a finite group acting on a space $X$, and let $$G \hookrightarrow E_G \longrightarrow B_G$$ be the universal principal $G$-bundle. Let $$X_G=(X \times E_G) /G$$ be the orbit space of the diagonal action on $X \times E_G$. Then the projection $$X \times E_G \longrightarrow E_G$$ is $G$-equivariant, and gives a fibration $$X\hookrightarrow X_G \longrightarrow B_G$$ called the Borel fibration \cite[Chapter IV]{Borel}. Recall that for $G= (\mathbb{Z}/2)^r$, we have
$$B_G= \underbrace{\mathbb{R}P^{\infty} \times \cdots \times \mathbb{R}P^{\infty}}_{r~\textrm{times}}$$
and hence
$$H^*(B_G; \mathbb{Z}/2)= \mathbb{Z}/2[\alpha_1,\dots, \alpha_r],$$
where $\alpha_i$ is a homogeneous element of degree one for each $1 \leq i \leq r$.

We will exploit the Leray-Serre spectral sequence associated to a fibration as given by the following theorem.

\begin{theorem}\cite[Theorem 5.2]{Mccleary}\label{thm4.1}
Let $X \hookrightarrow E \longrightarrow B$ be a fibration. Then there is a first quadrant spectral sequence of algebras $\{E_r^{*,*}, d_r \}$, converging to $H^*(E)$ as an algebra, with $$ E_2^{k,l}= H^k\big(B; \mathcal{H}^l(X)\big),$$ the cohomology of the base $B$ with local coefficients in the cohomology of the fiber $X$.
\end{theorem}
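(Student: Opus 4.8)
The statement is the standard existence and identification of the Leray–Serre spectral sequence, so the plan is to construct it from a filtered cochain complex and then identify the $E_2$-page. First I would replace $B$ by a CW-complex up to weak equivalence (or work directly with the singular filtration of $B$) and filter it by its skeleta $B^{(0)} \subseteq B^{(1)} \subseteq \cdots$. Writing $p \colon E \to B$ for the projection, pulling back along $p$ gives an increasing filtration $E_k = p^{-1}(B^{(k)})$ of the total space, and dually a decreasing filtration $F^k C^*(E)$ of the singular cochain complex, where $F^k C^n(E)$ consists of the cochains that vanish on every singular chain lying in $E_{k-1}$.

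Next, the general machinery of a filtered differential graded algebra produces a spectral sequence $\{E_r^{*,*}, d_r\}$ of algebras: since the cup product satisfies $F^k \cdot F^{k'} \subseteq F^{k+k'}$, every page inherits a bigraded product and each differential $d_r$ is a derivation for it. Because $B$ is finite dimensional the filtration is bounded in each total degree, so the spectral sequence lies in the first quadrant and converges to the associated graded of $H^*(E)$, hence to $H^*(E)$ as an algebra.

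The heart of the argument is the identification of the $E_2$-term. The $E_1$-page is $E_1^{k,l} = H^{k+l}(E_k, E_{k-1})$. Over each $k$-cell of $B$ the fibration is trivial, since cells are contractible, so excision lets me rewrite $H^{k+l}(E_k, E_{k-1})$ as the group of cellular $k$-cochains on $B$ with values in $H^l(X)$, twisted by the way the fiber is reglued along the attaching maps, that is, with values in the local system $\mathcal{H}^l(X)$ carried by the monodromy action of $\pi_1(B)$. Matching $d_1$ with the resulting twisted cellular coboundary operator then yields
$$E_2^{k,l} = H^k\big(B; \mathcal{H}^l(X)\big),$$
and the cup product on cellular cochains induces the claimed multiplicative structure.

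I expect this last identification to be the main obstacle: one must verify that the relative groups $H^*(E_k, E_{k-1})$ genuinely assemble into the cellular cochain complex with local coefficients and that $d_1$ is precisely the twisted coboundary. This requires a compatible system of trivializations of the fibration over the cells and careful bookkeeping of the monodromy to pin down $\mathcal{H}^l(X)$ as a local system, together with checking that all these identifications respect products so that the algebra structure persists to $E_2$ and beyond.
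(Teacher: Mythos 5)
This statement is quoted by the paper from McCleary \cite[Theorem 5.2]{Mccleary}; the paper supplies no proof of it, so your sketch can only be compared with the standard construction in the cited source --- which is in fact the route you take: filter $E$ by the preimages $p^{-1}\big(B^{(k)}\big)$ of the skeleta of $B$, pass to the induced multiplicative filtration of the singular cochain algebra, and identify the $E_1$-page with cellular cochains on $B$ twisted by the monodromy local system $\mathcal{H}^l(X)$. That is the right architecture, and your flagging of the $E_1$/$E_2$ identification (compatible trivializations over cells, bookkeeping of monodromy, compatibility with products) as the main technical burden is accurate.

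There is, however, one concrete flaw: you invoke finite dimensionality of $B$ to get boundedness of the filtration, and you derive both the first-quadrant property and convergence from it. The theorem carries no such hypothesis, and in context this is not a harmless simplification: the paper applies Theorem \ref{thm4.1} to the Borel fibration $X \hookrightarrow X_G \longrightarrow B_G$ with $B_G = \mathbb{R}P^{\infty} \times \cdots \times \mathbb{R}P^{\infty}$, which is infinite dimensional, so a proof valid only for finite-dimensional bases would support none of the paper's arguments. The correct logic runs the other way around. The first-quadrant property is not a consequence of boundedness but of the vanishing $E_1^{k,l} = H^{k+l}\big(p^{-1}(B^{(k)}), p^{-1}(B^{(k-1)})\big) = 0$ for $l<0$, which falls out of the same excision/trivialization analysis you defer to the $E_2$ identification. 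Once the $E_1$-term is concentrated in $l \geq 0$, each total degree $n$ involves only the finitely many columns $0 \leq k \leq n$, so the differentials into and out of these groups stabilize at a finite page regardless of $\dim B$. What then remains is to show the filtration of $H^n(E)$ is exhaustive and bounded, i.e.\ $F^{n+1}H^n(E)=0$, equivalently that $H^n(E) \to H^n\big(p^{-1}(B^{(n)})\big)$ is injective; for an infinite-dimensional base this requires a colimit argument over the skeleta (via homology and universal coefficients, or with attention to $\varprojlim^1$ terms), not finite dimensionality. With that repair, your outline matches the cited proof.
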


The product in $E_{r+1}^{*,*}$ is induced by the product in $E_r^{*,*}$ and the differentials are derivations. Further, there is an isomorphism of graded commutative algebra
$$H^*(E) \cong \textrm{Tot}E_{\infty}^{*,*},$$
where $\textrm{Tot}E_{\infty}^{*,*}$ is the total complex of $E_{\infty}^{*,*}$.

\begin{theorem}\cite[Theorem 5.9]{Mccleary}\label{thm4.2}
Let $X\stackrel{i}{\hookrightarrow} E \stackrel{\pi}{\longrightarrow} B$ be a fibration. Suppose that the system of local coefficients on $B$ is simple, then the edge homomorphisms
$$H^k(B)=E_2^{k,0} \longrightarrow E_3^{k,0}\longrightarrow \cdots  \longrightarrow E_k^{k,0} \longrightarrow E_{k+1}^{k,0}=E_{\infty}^{k,0}\subset H^k(E)~\textrm{and}$$
$$H^l(E) \longrightarrow E_{\infty}^{0,l}= E_{l+1}^{0,l} \subset E_{l}^{0,l} \subset \cdots \subset E_2^{0,l}= H^l(X)$$
are the homomorphisms $$\pi^*: H^k(B) \longrightarrow H^k(E)~ and ~ i^*: H^l(E) \longrightarrow H^l(X).$$
\end{theorem}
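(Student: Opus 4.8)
The plan is to derive both edge homomorphisms from the standard construction of the Leray--Serre spectral sequence, in which the skeletal filtration $B^{(0)} \subseteq B^{(1)} \subseteq \cdots$ of the base induces a decreasing filtration $F^\bullet H^*(E)$ on the cohomology of the total space with $E_\infty^{p,q} \cong F^pH^{p+q}(E)/F^{p+1}H^{p+q}(E)$. Because the spectral sequence lies in the first quadrant, this filtration satisfies $F^0H^n(E) = H^n(E)$ and $F^pH^n(E) = 0$ for $p > n$. The two edge homomorphisms then arise by isolating the extreme pieces of the filtration --- the bottom row $q = 0$ and the leftmost column $p = 0$ --- and the real work is to match the resulting purely algebraic maps with the topological maps $\pi^*$ and $i^*$.

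For the base edge I would first observe that every outgoing differential $d_r \colon E_r^{k,0} \to E_r^{k+r,1-r}$ lands outside the first quadrant and hence vanishes, so each $E_{r+1}^{k,0}$ is a quotient of $E_r^{k,0}$; iterating, $E_\infty^{k,0}$ is a quotient of $E_2^{k,0}$. Since the system of local coefficients is simple, $E_2^{k,0} = H^k\big(B;\mathcal{H}^0(X)\big) = H^k(B)$ with untwisted coefficients. On the other side, $F^{k+1}H^k(E) = 0$ forces $E_\infty^{k,0} = F^kH^k(E) \subseteq H^k(E)$, so the composite $H^k(B) \twoheadrightarrow E_\infty^{k,0} \hookrightarrow H^k(E)$ is defined. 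To identify it with $\pi^*$, I would invoke naturality of the spectral sequence for the map of fibrations from $X \hookrightarrow E \xrightarrow{\pi} B$ to the trivial fibration $\mathrm{pt} \hookrightarrow B \xrightarrow{=} B$, covered by $\pi$ on total spaces and the identity on bases. The latter spectral sequence collapses onto its bottom row with edge map the identity on $H^k(B)$, and since the induced map on abutments is $\pi^*$, tracing through the comparison of $E_\infty$-terms identifies the base edge with $\pi^*$.

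For the fiber edge the argument is dual. Every incoming differential $d_r \colon E_r^{-r,l+r-1} \to E_r^{0,l}$ originates outside the first quadrant and so vanishes, whence each $E_{r+1}^{0,l} = \ker d_r \subseteq E_r^{0,l}$ and $E_\infty^{0,l}$ is a subobject of $E_2^{0,l} = H^l(X)$. Since $F^0H^l(E) = H^l(E)$, the edge term is the top quotient $E_\infty^{0,l} = H^l(E)/F^1H^l(E)$, giving a surjection $H^l(E) \twoheadrightarrow E_\infty^{0,l} \hookrightarrow H^l(X)$. To identify this with $i^*$ I would apply naturality to the map of fibrations from the trivial fibration $X \xrightarrow{=} X \longrightarrow \mathrm{pt}$ into $X \hookrightarrow E \xrightarrow{\pi} B$, which is $i$ on total spaces and a point inclusion $\mathrm{pt} \hookrightarrow B$ on bases. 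The source spectral sequence collapses onto its left column with edge map the identity on $H^l(X)$, and since the induced map on abutments is restriction to the fiber, naturality shows the fiber edge is $i^*$.

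The routine part of all this is the vanishing of the two families of differentials, which is immediate from the first-quadrant shape. The \emph{main obstacle} is the last step in each case: identifying the algebraic edge map, defined through the filtration and its successive subquotients, with the geometric map $\pi^*$ or $i^*$. I expect this to require either unwinding the explicit cochain-level description of the filtration --- that a $k$-cocycle pulled back from $B$ already lives in filtration degree $k$, and that restricting a cocycle on $E$ to the fiber over a $0$-cell realizes the projection onto $E_\infty^{0,l}$ --- or, more cleanly, careful bookkeeping in the two naturality squares above, where the comparison fibrations whose spectral sequences degenerate pin down the edge homomorphisms exactly.
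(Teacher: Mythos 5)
This statement is not proved in the paper at all: it is imported verbatim as \cite[Theorem 5.9]{Mccleary}, so there is no internal proof to compare against. Your argument is correct and is essentially the standard textbook proof (the one given in the cited reference): the first-quadrant vanishing shows the bottom row only loses by quotients and the left column only loses by passing to subobjects, the filtration extremes $F^{k}H^{k}(E)=E_{\infty}^{k,0}$ and $H^{l}(E)/F^{1}H^{l}(E)=E_{\infty}^{0,l}$ give the two composites, and the identification with $\pi^{*}$ and $i^{*}$ follows from naturality of the spectral sequence applied to the comparison maps $\pi\colon E\to B$ over $\mathrm{id}_{B}$ (against the fibration $\mathrm{pt}\hookrightarrow B\to B$) and $i\colon X\to E$ over $\mathrm{pt}\hookrightarrow B$ (against $X\to X\to \mathrm{pt}$), whose spectral sequences collapse. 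The only points worth making explicit in a complete write-up are the connectedness of fiber and base (so that $E_{2}^{k,0}\cong H^{k}(B)$ and $E_{2}^{0,l}\cong H^{l}(X)$ under the simple-coefficients hypothesis) and the fact that a fiber-preserving map induces a morphism of spectral sequences compatible with the filtrations on the abutments, which is exactly the statement your two naturality squares rely on.
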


Next we recall some results regarding elementary abelian 2-group actions on finite dimensional CW-complexes.

\begin{theorem}\cite[Theorem 3.10.4]{Allday1}\label{thm4.3}
Let $G=(\mathbb{Z}/2)^r$ act freely on a finite dimensional CW-complex $X$. Suppose that $\sum_{i\geq0} \textrm{rk}\big( H^i(X) \big)< \infty$ and the induced action on $H^*(X)$ is trivial, then the Leray-Serre spectral sequence associated to $X \hookrightarrow X_G \longrightarrow B_G$ does not degenerate at the $E_2$ term.
\end{theorem}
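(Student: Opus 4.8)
The plan is to argue by contradiction: I assume the Leray--Serre spectral sequence of the Borel fibration $X \hookrightarrow X_G \longrightarrow B_G$ collapses at $E_2$ and derive a contradiction with the finite-dimensionality of $X$. We may assume $r \geq 1$, since otherwise there is nothing to prove. The geometric input is that, because $G=(\mathbb{Z}/2)^r$ acts \emph{freely} on $X$, the $G$-equivariant projection $X\times E_G \to X$ descends to a bundle $X_G=(X\times E_G)/G \longrightarrow X/G$ with fibre $E_G$. Since $E_G$ is contractible, this map is a homotopy equivalence, so $H^*(X_G)\cong H^*(X/G)$. Moreover $X/G$ is the base of the finite (free, hence covering) $G$-action on the finite-dimensional CW-complex $X$, so it is again a CW-complex of dimension at most $\dim X =: N<\infty$. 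Consequently $H^p(X_G)\cong H^p(X/G)=0$ for every $p>N$. This vanishing in high degrees is the only way finiteness of $X$ enters, and it is exactly what will clash with collapse.

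Next I identify the bottom row of the $E_2$-page. Since the induced action on $H^*(X)$ is trivial, the system of local coefficients $\mathcal{H}^*(X)$ on $B_G$ is simple, so Theorem \ref{thm4.1} together with the universal coefficient theorem over the field $\mathbb{Z}/2$ gives $E_2^{p,q}=H^p(B_G)\otimes H^q(X)$. In particular $E_2^{p,0}=H^p(B_G)$, using $H^0(X)=\mathbb{Z}/2$. Recall from the excerpt that $H^*(B_G)=\mathbb{Z}/2[\alpha_1,\dots,\alpha_r]$, whose degree-$p$ component is nonzero for \emph{every} $p\geq 0$ as soon as $r\geq 1$. Because the local system is simple, Theorem \ref{thm4.2} applies and realizes the edge homomorphism $\pi^*\colon H^p(B_G)\to H^p(X_G)$ as the composite $E_2^{p,0}\to E_\infty^{p,0}$ followed by the genuine inclusion $E_\infty^{p,0}\subset H^p(X_G)$ of the bottom filtration piece.

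Now suppose the spectral sequence degenerates at $E_2$, that is $d_r=0$ for all $r\geq 2$. Then $E_\infty^{p,0}=E_2^{p,0}=H^p(B_G)$ for every $p$, and combining this with the previous paragraph I get that $H^p(B_G)$ embeds into $H^p(X_G)$ for all $p$. Choosing any $p>N$ gives the nonzero space $H^p(B_G)$ sitting inside $H^p(X_G)=H^p(X/G)=0$, which is absurd. Hence the spectral sequence cannot collapse at $E_2$, proving the theorem.

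The step deserving the most care is the bookkeeping around the edge homomorphism: one must know not merely that $E_\infty^{p,0}$ is a subquotient of $H^p(X_G)$ but that it injects into it as a subspace, and that collapse forces $E_\infty^{p,0}\cong E_2^{p,0}$. Both are supplied by Theorem \ref{thm4.2} once the local system has been checked to be simple, which is where the triviality hypothesis on $H^*(X)$ is used. The homotopy equivalence $X_G\simeq X/G$ and the dimension bound $H^{>N}(X/G)=0$ are standard consequences of freeness and finite-dimensionality; everything else in the argument is formal. I note that the contradiction needs only the vanishing of high-degree cohomology of $X/G$, so the stronger hypothesis $\sum_{i}\mathrm{rk}(H^i(X))<\infty$ is not strictly required for this route, though it guarantees the $E_2$-page is finite in each total degree.
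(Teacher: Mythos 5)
Your argument is correct in substance, but note the context: the paper does not prove this statement at all---it is quoted from Allday--Puppe \cite[Theorem 3.10.4]{Allday1}---so there is no in-paper proof to compare against line by line. What you have written is the standard argument, and it is in fact exactly the argument the paper itself runs inside the proof of Theorem~\ref{thm1.1}: there, collapse of the spectral sequence for the restricted $\mathbb{Z}/2$-action would force $H^*(X_G)$ to be nonzero in arbitrarily high degrees, contradicting Proposition~\ref{prop4.4}. Your bookkeeping with the edge homomorphism is the right way to make ``collapse implies $H^p(B_G)$ embeds in $H^p(X_G)$'' precise: with a simple coefficient system, Theorem~\ref{thm4.2} identifies $E_\infty^{p,0}$ with a genuine subspace of $H^p(X_G)$, and degeneration gives $E_\infty^{p,0}=E_2^{p,0}=H^p(B_G)\neq 0$ for all $p$ once $r\geq 1$.

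One technical step deserves repair. You assert that $X/G$ is a CW-complex of dimension at most $\dim X$; but a free action of a finite group on a CW-complex need not be cellular, so the quotient does not automatically inherit a CW structure, and making this rigorous requires either an equivariant CW approximation or a point-set argument (covering dimension of $X/G$, comparison of \v{C}ech and singular cohomology on the locally contractible paracompact space $X/G$). The cleaner route, which avoids the quotient entirely, is to invoke Proposition~\ref{prop4.4} of the paper: it states precisely that $H^*(X_G)\cong H^*(X/G)$ and that $H^i(X)=0$ for $i>n$ forces $H^i(X_G)=0$ for $i>n$, which is all your contradiction needs. Two further small observations: your identification $E_2^{p,0}=H^p(B_G)$ tacitly uses that $X$ is connected (otherwise replace it by $H^p(B_G;H^0(X))$, which is still nonzero, so nothing breaks); and your closing remark is fair---this route uses only finite-dimensionality of $X$, not the hypothesis $\sum_i \mathrm{rk}\, H^i(X)<\infty$, which is carried along in the statement because it comes from the more general formulation in Allday--Puppe (and is automatic in the paper's applications).
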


\begin{proposition}\cite[Proposition 3.10.9 and Lemma 3.10.16]{Allday1}\label{prop4.4}
Let $G =(\mathbb{Z}/2)^r$ act freely on a finite dimensional CW-complex $X$. Then $H^*(X/G) \cong H^*(X_G)$. Further, if $H^i(X)=0$ for all $i>n$, then $H^i(X_G)=0$ for all $i>n$.
\end{proposition}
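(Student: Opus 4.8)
The plan is to prove the two assertions in turn, deriving the cohomological dimension bound from the identification of $H^*(X_G)$ with $H^*(X/G)$. For the first assertion I would consider the map $X_G=(X\times E_G)/G \to X/G$ induced by the $G$-equivariant projection $X\times E_G\to X$. Because $G$ acts freely on $X$, the quotient $X\to X/G$ is a principal $G$-bundle, and the displayed map is the associated bundle with fibre $E_G$: the fibre over an orbit $[x]$ is $\{[x,e]:e\in E_G\}$, and freeness of the action identifies it with $E_G$. Feeding this fibration into the Leray–Serre spectral sequence of Theorem \ref{thm4.1}, the $E_2$-page is $E_2^{k,l}=H^k\big(X/G;\mathcal{H}^l(E_G)\big)$. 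Since $E_G$ is contractible, $\mathcal{H}^l(E_G)$ vanishes for $l>0$ and equals the constant system $\mathbb{Z}/2$ for $l=0$, so the spectral sequence collapses at $E_2$ and yields $H^*(X_G)\cong H^*(X/G)$.

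By this identification it suffices, for the second assertion, to show that $H^i(X/G)=0$ for all $i>n$. Since $X\to X/G$ is a finite covering and $X$ is finite dimensional, the quotient $X/G$ is again a finite dimensional complex, so its cohomology vanishes above some degree. I would argue by induction on $r$. For the base case $r=1$, with $G=\mathbb{Z}/2$ and $Y=X/G$, let $w\in H^1(Y;\mathbb{Z}/2)$ be the class classifying the double cover $q\colon X\to Y$, and use the Smith–Gysin exact sequence
$$\cdots \to H^{i}(Y)\xrightarrow{\,\cup w\,} H^{i+1}(Y)\xrightarrow{\,q^*\,} H^{i+1}(X)\to H^{i+1}(Y)\xrightarrow{\,\cup w\,} H^{i+2}(Y)\to\cdots.$$
For $i>n$ both $H^i(X)$ and $H^{i+1}(X)$ vanish, and exactness then forces $\cup w\colon H^i(Y)\to H^{i+1}(Y)$ to be an isomorphism in this range. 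Combined with the vanishing of $H^j(Y)$ for $j$ large, a descending induction on $i$ gives $H^i(Y)=0$ for all $i>n$, as required.

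For the inductive step I would choose a rank-one subgroup $N\cong\mathbb{Z}/2$ of $G$; it acts freely on $X$, and the base case applied to $N$ shows that $X'=X/N$ is a finite dimensional complex with $H^i(X')=0$ for $i>n$. The quotient group $G/N\cong(\mathbb{Z}/2)^{r-1}$ acts freely on $X'$ — freeness being a short orbit-chasing check — and $X'/(G/N)=X/G$, so the inductive hypothesis completes the argument.

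The one genuinely delicate point is the base case. Because $|G|=2^r$ is even, the usual transfer argument, which would split $q^*$ after multiplication by the covering degree, is unavailable over $\mathbb{Z}/2$; this is precisely why I route the finite dimensionality of $X$ through the Smith–Gysin sequence and the resulting cup-product isomorphisms rather than through a transfer. Everything else, namely the contractible-fibre collapse of the first part and the orbit-chasing in the inductive step, is routine.
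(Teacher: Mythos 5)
The paper does not actually prove Proposition \ref{prop4.4}; it is imported wholesale from Allday--Puppe \cite{Allday1}, so there is no internal argument to measure your proposal against, and it must be judged as a free-standing proof. Judged that way, it is correct. For the first assertion, your identification of $X_G \to X/G$ as the $E_G$-bundle associated to the principal $G$-bundle $X \to X/G$ is sound (a free action of a finite group on a Hausdorff space is properly discontinuous, so the quotient map is a covering, hence a principal bundle), and the collapse of the Leray--Serre spectral sequence with contractible fibre gives $H^*(X_G)\cong H^*(X/G)$; the reference obtains the same isomorphism from the Vietoris--Begle mapping theorem applied to the same map, whose point inverses are copies of $E_G$ --- the two mechanisms are interchangeable here. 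For the second assertion, your Smith--Gysin induction is the right tool: the exactness argument showing $\cup\, w\colon H^i(Y)\to H^{i+1}(Y)$ is an isomorphism for $i>n$, the descending induction from high-degree vanishing, and the orbit-chasing that makes $G/N$ act freely on $X/N$ are all correct, and your remark that the mod $2$ transfer is useless (since $|G|$ is even) correctly explains why the cup-product isomorphisms are the essential mechanism. The one point you should make explicit is categorical rather than mathematical: the quotient of a finite-dimensional CW-complex by a free finite group action need not be a CW-complex (the action need not be cellular), so to assert that $X/N$ is "a finite dimensional complex" and feed it back into the induction you should either replace $X$ by an equivariant CW-approximation, or work, as the paper and \cite{Allday1} allow, in the category of paracompact spaces of finite cohomological dimension with \v{C}ech cohomology, where finite coverings preserve dimension and all the exact sequences you use are available. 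With that adjustment your proof is complete.
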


We will also use the following results regarding non-trivial common zeros of a system of homogeneous polynomials.

\begin{proposition}\cite[Proposition 1]{Carlsson1}\label{prop4.5}
Let $G= (\mathbb{Z}/2)^r$ and $f_1,\dots,f_n$ be elements of $H^m(B_G)$ regarded as homogeneous polynomials of degree $m$ in $r$ variables. Then they have a non-trivial common zero in $(\mathbb{Z}/2)^r$ if and only if there is a subgroup inclusion $j:\mathbb{Z}/2 \hookrightarrow (\mathbb{Z}/2)^r$ such that $j^*(f_i)=0$ for each $1 \leq i \leq n$, where $j^*:H^*(B_G) \to H^*(B_{\mathbb{Z}/2})$ is the induced map on cohomology.
\end{proposition}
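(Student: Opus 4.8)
The plan is to reduce the statement to elementary linear algebra over $\mathbb{Z}/2$ by making the dictionary between subgroup inclusions and nonzero vectors fully explicit, and then to compute the restriction homomorphism $j^*$ on the polynomial generators. First I would recall the algebraic meaning of the generators: $H^1(B_G) = \operatorname{Hom}(G,\mathbb{Z}/2)$ is the $\mathbb{Z}/2$-linear dual of $G=(\mathbb{Z}/2)^r$, and the generators $\alpha_1,\dots,\alpha_r$ of $H^*(B_G)=\mathbb{Z}/2[\alpha_1,\dots,\alpha_r]$ form the basis dual to the standard basis $e_1,\dots,e_r$ of $G$. Likewise $B_{\mathbb{Z}/2}=\mathbb{R}P^{\infty}$ gives $H^*(B_{\mathbb{Z}/2})=\mathbb{Z}/2[t]$, with $t$ the generator of $H^1(B_{\mathbb{Z}/2})$.

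Next I would set up the correspondence between the two sides of the equivalence. A subgroup inclusion $j:\mathbb{Z}/2 \hookrightarrow (\mathbb{Z}/2)^r$ is completely determined by the image $v=j(1)$ of the nonzero element, and $v$ may be an arbitrary nonzero vector $(c_1,\dots,c_r)\in (\mathbb{Z}/2)^r$; conversely every such $v$ determines an inclusion $j$. Thus the data of a nontrivial point $(c_1,\dots,c_r)\neq 0$ at which to evaluate the $f_i$ is exactly the data of a subgroup inclusion $j$, and the whole statement becomes the assertion that $j^*(f_i)=0$ for all $i$ precisely when $v$ is a common zero of all the $f_i$.

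The computational heart is then to determine $j^*$ on generators. By naturality of the classifying-space construction under group homomorphisms, the map $j^*:H^1(B_G)\to H^1(B_{\mathbb{Z}/2})$ is the $\mathbb{Z}/2$-linear transpose of $j:\mathbb{Z}/2\to G$, $1\mapsto v=\sum_i c_i e_i$; equivalently $j^*(\alpha_i)=\alpha_i\circ j$, which sends $1\mapsto \alpha_i(v)=c_i$, so that $j^*(\alpha_i)=c_i\,t$. Since each $f_i$ is homogeneous of degree $m$ and $j^*$ is a ring homomorphism, substituting yields
$$j^*(f_i)=f_i(c_1 t,\dots,c_r t)=f_i(c_1,\dots,c_r)\,t^m,$$
the last equality factoring the common power $t^m$ out of the degree-$m$ polynomial. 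As $t^m\neq 0$ in $\mathbb{Z}/2[t]$, we conclude $j^*(f_i)=0$ if and only if $f_i(c_1,\dots,c_r)=0$.

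Finally I would assemble the equivalence: a subgroup inclusion $j$ with $j^*(f_i)=0$ for every $i$ exists if and only if there is a nonzero vector $(c_1,\dots,c_r)$ with $f_i(c_1,\dots,c_r)=0$ for every $i$, which is exactly the existence of a nontrivial common zero of $f_1,\dots,f_n$ in $(\mathbb{Z}/2)^r$. The one point that genuinely requires care is the identification $j^*(\alpha_i)=c_i\,t$: it depends on the naturality of the isomorphism $H^*(B_{(\mathbb{Z}/2)^r})\cong \mathbb{Z}/2[\alpha_1,\dots,\alpha_r]$ together with the fact that restriction on $H^1$ is dual to the inclusion of groups. Once this is verified, the remainder is the single homogeneity substitution above, so I do not expect any further obstacle.
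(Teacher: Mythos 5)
Your proof is correct and complete: the dictionary between nonzero vectors $v\in(\mathbb{Z}/2)^r$ and subgroup inclusions $j$, the naturality identification $j^*(\alpha_i)=c_i\,t$, and the homogeneity substitution $j^*(f_i)=f_i(c_1,\dots,c_r)\,t^m$ with $t^m\neq 0$ in $\mathbb{Z}/2[t]$ together give exactly the stated equivalence. Note that the paper itself offers no proof of this proposition --- it is quoted from Carlsson \cite[Proposition 1]{Carlsson1} --- and your argument is precisely the standard one underlying that reference, so there is no methodological divergence to report.
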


\begin{proposition}\cite[Proposition 4]{Carlsson1}\label{prop4.6}
Let $G= (\mathbb{Z}/2)^r$ and $f_1,\dots,f_n$ be elements of $H^*(B_G)$ regarded as homogeneous polynomials in $r$ variables. Suppose that the ideal $\langle f_1,\dots,f_n \rangle$ is invariant under the action of the Steenrod algebra. Then they have a non-trivial common zero in $(\mathbb{Z}/2)^r$ if $r>n$.
\end{proposition}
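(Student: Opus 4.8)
The plan is to pass to the variety cut out by the $f_i$ over the algebraic closure $\overline{\mathbb{F}}_2$, translate Steenrod-invariance into invariance under a Frobenius-type map, and then descend a suitable closure point to an honest $\mathbb{F}_2$-rational point. Write $H^*(B_G)=\mathbb{F}_2[\alpha_1,\dots,\alpha_r]$ with each $|\alpha_i|=1$. The total square $\mathrm{Sq}=\sum_{k\ge 0}\mathrm{Sq}^k$ is additive, multiplicative by the Cartan formula, and satisfies $\mathrm{Sq}(\alpha_i)=\alpha_i+\alpha_i^2$; hence it is the ring endomorphism $\phi$ of $\mathbb{F}_2[\alpha_1,\dots,\alpha_r]$ determined by $\alpha_i\mapsto\alpha_i+\alpha_i^2$. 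For the homogeneous ideal $I=\langle f_1,\dots,f_n\rangle$, closure under every $\mathrm{Sq}^k$ is equivalent to $\phi(I)\subseteq I$. Letting $\psi\colon\overline{\mathbb{F}}_2^{\,r}\to\overline{\mathbb{F}}_2^{\,r}$ be the map with $\psi(x)_i=x_i+x_i^2$, one has $f(\psi(x))=(\phi f)(x)$, so $\phi(I)\subseteq I$ says exactly that the affine zero locus $V=V(I)\subseteq\overline{\mathbb{F}}_2^{\,r}$, a cone defined over $\mathbb{F}_2$, satisfies $\psi(V)\subseteq V$.

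First I would record two elementary ingredients. (i) Since $V$ is defined by the $n$ elements $f_1,\dots,f_n$ and $n<r$, Krull's height theorem forces every irreducible component of $V$ to have dimension at least $r-n\ge 1$; hence $V\ne\{0\}$. This is the only use of the hypothesis $r>n$. (ii) The coordinatewise Frobenius $F\colon x\mapsto(x_i^2)_i$ is additive in characteristic $2$ and commutes with $\mathrm{id}$, so $\psi=\mathrm{id}+F$ gives, for every $m$, $\psi^{2^m}=\mathrm{id}+F^{2^m}$, whose kernel on $\overline{\mathbb{F}}_2^{\,r}$ is precisely $\mathbb{F}_{2^{2^m}}^{\,r}$; in particular $\ker\psi=\mathbb{F}_2^{\,r}$. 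Consequently, if $w\in V$ is any nonzero point whose coordinates lie in $K:=\bigcup_m\mathbb{F}_{2^{2^m}}$, then $\psi^{2^m}w=0$ for some $m$, the whole orbit $w,\psi w,\psi^2 w,\dots$ stays in $V$ by $\psi$-invariance, and the last nonzero term of this orbit lies in $\ker\psi\cap V=V(\mathbb{F}_2)\setminus\{0\}$. Such a term is exactly a non-trivial common zero of $f_1,\dots,f_n$ in $(\mathbb{Z}/2)^r$.

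It therefore remains to produce a single nonzero point of $V$ whose coordinates lie in $K$, and here I expect the real difficulty to lie. For an arbitrary cone no such point need exist: three Galois-conjugate lines defined only over $\mathbb{F}_8$ have no nonzero point over any $\mathbb{F}_{2^{2^m}}$, as $3$ is not a power of $2$; so Steenrod-invariance must enter essentially. The clean way to package it is the structural fact that a radical Steenrod-closed homogeneous ideal has zero locus equal to a finite union of $\mathbb{F}_2$-rational linear subspaces. Since $\phi$ is a ring homomorphism, $\sqrt{I}$ is again Steenrod-closed, and replacing $I$ by $\sqrt{I}$ leaves $V$ unchanged; granting the structural fact, $V\ne\{0\}$ forces one subspace in the union to have positive dimension, and such a subspace has at least two $\mathbb{F}_2$-points, all lying in $V$. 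I would establish the structural fact by induction on $r$, using that the finite map $\psi$ carries each irreducible component of $V$ onto a component of the same dimension; the heart of the matter is to show that every component is an $\mathbb{F}_2$-rational linear subspace, by producing $\mathbb{F}_2$-rational linear forms in $\sqrt{I}$ and slicing. This is exactly what separates Steenrod-closed ideals from general homogeneous ones, and I expect it to be the main obstacle; with it in hand, ingredients (i) and (ii) close the argument at once.
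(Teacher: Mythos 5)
The paper itself never proves this proposition: it is quoted directly from Carlsson (Proposition 4 of the cited paper), so the only thing to measure your attempt against is Carlsson's own argument. Your preliminary reductions are all correct and are exactly the standard scaffolding: for a homogeneous ideal, closure under every $\mathrm{Sq}^k$ is equivalent to invariance under the total square $\phi$, the ring endomorphism with $\alpha_i\mapsto\alpha_i+\alpha_i^2$; consequently the zero cone $V\subseteq\overline{\mathbb{F}}_2^{\,r}$ satisfies $\psi(V)\subseteq V$ for $\psi=\mathrm{id}+F$; Krull's height theorem gives $\dim V\geq r-n\geq 1$; and the identity $\psi^{2^m}=\mathrm{id}+F^{2^m}$ correctly converts any nonzero point of $V$ rational over $K=\bigcup_m\mathbb{F}_{2^{2^m}}$ into a nonzero point of $V\cap\mathbb{F}_2^r$. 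But the proof has a genuine gap, and you have located it yourself: everything hinges on the ``structural fact'' that the zero locus of a radical Steenrod-closed homogeneous ideal is a finite union of $\mathbb{F}_2$-rational linear subspaces, and this you do not prove --- you only sketch an induction and call it the main obstacle. That fact is precisely where the content of the proposition lives (your three-conjugate-lines example shows correctly that nothing weaker, such as definability of $V$ over $\mathbb{F}_2$, can suffice), and it is at least as strong as the statement being proved; assuming it reduces the proposition to something harder, not easier. What you have, then, is a correct reduction plus the routine ingredients, not a proof.

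The gap can be closed more cheaply than by the full classification, by combining the cone structure with $\psi$-invariance pointwise. Let $w\in V$ be nonzero (your ingredient (i)). Since $V$ is a cone, $\mu w\in V$ for all $\mu$; applying $\psi$ and the cone property again, $\lambda\bigl(\mu w+\mu^2F(w)\bigr)\in V$ for all $\lambda,\mu$. Solving $\lambda\mu=\alpha$, $\lambda\mu^2=\beta$ shows that every point $\alpha w+\beta F(w)$ with $\alpha\beta\neq 0$ lies in $V$; since $V$ is Zariski-closed, the whole plane spanned by $w$ and $F(w)$ lies in $V$ whenever these are independent (if instead $F(w)=cw$, then each $w_i\in\{0,c\}$, so $w$ is a scalar multiple of a nonzero vector in $\mathbb{F}_2^r$, which lies in $V$ by the cone property, and you are done). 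Iterating this argument --- a short induction of the same kind --- shows that $V$ contains the span $U_w$ of the finite Frobenius orbit $w,F(w),F^2(w),\dots$. Now $U_w$ is stable under the coordinatewise Frobenius, i.e.\ under the semilinear Galois action, so by Galois descent it is spanned by $U_w\cap\mathbb{F}_2^r$; as $\dim U_w\geq 1$, this produces a nonzero $\mathbb{F}_2$-point of $V$. This single argument replaces both your unproved structural fact and your ingredient (ii) (the field $K$ never needs to enter), and arguments of exactly this kind are what underlie Carlsson's Proposition 4 and the classification you wanted to invoke.
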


\begin{proposition}\cite{Greenberg}\label{prop4.7}
Let $f_1,\dots,f_n$ be homogeneous polynomials of degree $m$ in $r$ variables with coefficients in $\mathbb{Z}/2$. Then they have a non-trivial common zero in $(\mathbb{Z}/2)^r$ if $r>mn$.
\end{proposition}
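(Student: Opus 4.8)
The plan is to prove this by a counting argument of Chevalley--Warning type, exploiting the fact that over $\mathbb{Z}/2$ the indicator function of the common zero locus can be written as a single polynomial of controlled degree. Since each $f_i$ is homogeneous, the origin $(0,\dots,0)$ is automatically a common zero, so the set $Z$ of common zeros in $(\mathbb{Z}/2)^r$ is nonempty. The strategy is to show that $|Z|$ is \emph{even}; then $|Z|\geq 2$ forces the existence of a nontrivial common zero.

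First I would introduce the polynomial
$$P(x) = \prod_{i=1}^{n}\big(1 + f_i(x)\big) \in \mathbb{Z}/2[x_1,\dots,x_r].$$
For a point $x \in (\mathbb{Z}/2)^r$ one has $P(x) = 1$ when all $f_i(x) = 0$, and $P(x) = 0$ otherwise, since $1 + f_i(x) = 0$ as soon as $f_i(x) = 1$. Thus $P$ is the characteristic function of $Z$, and reducing the integer $|Z|$ modulo $2$ gives
$$|Z| \equiv \sum_{x \in (\mathbb{Z}/2)^r} P(x) \pmod 2.$$
The crucial observation is that $\deg P \leq \sum_{i=1}^n \deg f_i = mn$, which by hypothesis is strictly less than $r$.

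The key step is then to evaluate $\sum_{x} x_1^{e_1}\cdots x_r^{e_r}$ over $(\mathbb{Z}/2)^r$ for a monomial of total degree $\sum_j e_j \leq mn < r$. Over $\mathbb{Z}/2$ one has $x_j^{e_j} = x_j$ whenever $e_j \geq 1$, so the monomial depends only on its support $S = \{\, j : e_j \geq 1 \,\}$, and $|S| \leq \sum_j e_j < r$. Summing $\prod_{j \in S} x_j$ over all $x$ leaves the $r - |S| \geq 1$ coordinates outside $S$ free, so the sum equals $2^{\,r - |S|} \equiv 0 \pmod 2$. Applying this to every monomial appearing in the expansion of $P$ yields $\sum_x P(x) \equiv 0$, whence $|Z|$ is even. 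Combined with $\mathbf{0} \in Z$, this gives $|Z| \geq 2$ and hence a nontrivial common zero.

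I do not anticipate a serious obstacle here: the argument is elementary once the indicator polynomial $P$ is written down. The only point requiring care is the degree bookkeeping — one must verify that every monomial occurring in $P$ genuinely has total degree at most $mn$, so that its support has size strictly less than $r$ and the vanishing lemma applies uniformly. The homogeneity hypothesis is used solely to place the origin in $Z$, which is exactly what upgrades the conclusion ``the number of common zeros is even'' to ``there is a nontrivial common zero.''
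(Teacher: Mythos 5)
Your proof is correct and complete. Note that the paper itself does not prove this proposition: it is quoted from Greenberg's \emph{Lectures on forms in many variables}, so the only comparison available is with the cited source rather than with an internal argument. What you have written is exactly the Chevalley--Warning theorem specialized to the field $\mathbb{Z}/2$: your indicator polynomial $\prod_{i=1}^{n}(1+f_i)$ is the $q=2$ instance of the general indicator $\prod_{i=1}^{n}\bigl(1-f_i^{\,q-1}\bigr)$, and your monomial-sum lemma is the $q=2$ instance of the standard vanishing lemma for power sums over a finite field, so in substance you have reproduced the proof in the cited reference. Two small remarks. First, the degree bookkeeping you flag as the delicate point is automatic: $P$ is a product of $n$ factors each of degree at most $m$, so every monomial in its expansion has total degree at most $mn$. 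Second, you should make explicit the hypothesis $m\geq 1$, since homogeneity places the origin in $Z$ only when the degree is positive; for $m=0$ the statement fails outright (take $f_1=1$), though this costs nothing here because in the paper's applications the forms have degree $2$ or $3$.
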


\section{Induced action on cohomology}
Given a continuous map of topological spaces, determining the induced map on cohomology is a difficult problem in general, even for nice spaces such as spheres. In this section, we show that there are involutions on Milnor manifolds for which the induced action on mod 2 cohomology is non-trivial. First we consider the real case.

\begin{proposition}\label{prop5.1}
Let $A: \mathbb{R}H_{s,s} \longrightarrow \mathbb{R}H_{s,s}$ be the free involution given by
$$A\big([x_0,\dots,x_s], [y_0,\dots,y_s]\big) \longmapsto \big([y_0,\dots,y_s], [x_0,\dots,x_s]\big).$$
Then $A^*: H^*(\mathbb{R}H_{s,s}) \longrightarrow H^*(\mathbb{R}H_{s,s})$ is non-trivial.
\end{proposition}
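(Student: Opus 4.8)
The plan is to compute $A^*$ by realizing $A$ as the restriction of the factor-swapping involution on the ambient product and then transporting the computation along the restriction map. Write $\iota \colon \mathbb{R}H_{s,s} \hookrightarrow \mathbb{R}P^s \times \mathbb{R}P^s$ for the inclusion, and let $\tilde{A}$ be the involution on $\mathbb{R}P^s \times \mathbb{R}P^s$ that interchanges the two factors, so that $\tilde{A} \circ \iota = \iota \circ A$ and hence $A^* \circ \iota^* = \iota^* \circ \tilde{A}^*$. Denoting by $u, v \in H^1(\mathbb{R}P^s \times \mathbb{R}P^s)$ the generators pulled back from the first ($[x]$) and second ($[y]$) factors, we have $H^*(\mathbb{R}P^s \times \mathbb{R}P^s) = \mathbb{Z}/2[u,v]/\langle u^{s+1}, v^{s+1}\rangle$ and $\tilde{A}^*$ simply exchanges $u$ and $v$.

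The key step is to match the abstract generators $a,b$ of Lemma \ref{lem2.1} with the restricted classes $\iota^* v$ and $\iota^* u$. Using the description of $\mathbb{R}H_{s,s}$ as the projectivization $\mathbb{P}(E^\perp)$ of the rank-$s$ bundle $E^\perp$ over the base $\mathbb{R}P^s$ (the $[y]$ factor), the projection $\pi$ is the restriction of the second-factor projection, so $a = \pi^* v = \iota^* v$; and $b$, the first Stiefel--Whitney class of the tautological line bundle over $\mathbb{P}(E^\perp)$, is the restriction of the generator of the $[x]$ factor, i.e. $b = \iota^* u$. That this choice of $b$ reproduces the relation of Lemma \ref{lem2.1} follows from $w(E^\perp) = (1+a)^{-1} = 1 + a + \cdots + a^s$. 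I would confirm that these two classes are linearly independent by restricting to a fibre $\mathbb{R}P^{s-1}$: there $\iota^* u$ restricts to the generator of $H^1(\mathbb{R}P^{s-1})$, while $\iota^* v = \pi^* v$ restricts to zero.

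With the identification in hand, naturality gives
$$A^*(a) = A^* \iota^* v = \iota^* \tilde{A}^* v = \iota^* u = b, \qquad A^*(b) = a.$$
For $s \geq 2$ the two defining relations of Lemma \ref{lem2.1} lie in degrees $s$ and $s+1$, both at least $2$, so $a$ and $b$ are linearly independent in $H^1(\mathbb{R}H_{s,s})$; hence $A^*(a) = b \neq a$ and $A^*$ is non-trivial. The \emph{main obstacle} is precisely the generator-matching of the second paragraph: the isomorphism of Lemma \ref{lem2.1} is given only abstractly, so some care with the bundle description is needed to pin down which geometric classes $a$ and $b$ represent and to ensure the exchange $a \leftrightarrow b$ is genuine rather than an artifact of a relation. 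One should also flag the degenerate case $s=1$, where $\mathbb{R}H_{1,1} \cong \mathbb{S}^1$, the relation forces $a=b$, and $A$ is a free rotation acting trivially on mod $2$ cohomology; thus the statement is genuinely one about $s \geq 2$.
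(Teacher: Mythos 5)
Your proof is correct for $s \geq 2$, and its skeleton is the same as the paper's: realize $A$ as the restriction of the factor-swapping involution $\tilde A$ on $\mathbb{R}P^s \times \mathbb{R}P^s$, identify the images of the two degree-one generators of the product under the restriction map, and conclude by naturality that $A^*$ exchanges $a$ and $b$. The difference lies in how the generator-matching is carried out. The paper first shows by an explicit singular-chain computation that the swap acts nontrivially on $H_1$ of the product, and then identifies $j_1^*(a_2)=a$ and $j_1^*(b_2)=b$ via the edge homomorphisms of the Leray--Serre spectral sequences of the bundle $\pi$ and of the trivial bundle $pr_1$ (Theorems \ref{thm4.1} and \ref{thm4.2}). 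You instead use the projectivization description, pinning $a$ as the pullback of the base generator and $b$ as the first Stiefel--Whitney class of the tautological line bundle over $\mathbb{P}(E^\perp)$, with the Whitney formula $w(E^\perp)=(1+a)^{-1}$ guaranteeing that this identification reproduces the relation of Lemma \ref{lem2.1}; both routes are sound, yours being more geometric and self-contained, the paper's staying inside the spectral-sequence machinery it has already set up. More importantly, your final paragraph catches a point the paper glosses over: the inference ``$A^*(a)=b$, hence $A^*$ is nontrivial'' requires $a \neq b$ in $H^1$, which holds exactly when $s \geq 2$, since only then do the defining relations sit in degrees $\geq 2$. For $s=1$ one has $\mathbb{R}H_{1,1} \cong \mathbb{S}^1$, the relation $b+a=0$ forces $a=b$, and the free involution $A$ acts trivially on mod $2$ cohomology, so the proposition as stated (the paper's standing assumption $1 \leq s \leq r$ permits $s=1$) fails in that case; this also affects the remark following Proposition \ref{prop5.2}, which asserts non-trivial cohomology actions for all $1 \leq s_i = r_i$. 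Your explicit restriction to $s \geq 2$ is the right fix, and the fibre-restriction check you propose for linear independence is a clean alternative to reading it off from the degrees of the relations.
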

\begin{proof}
By lemma \ref{lem2.1} $$H^*(\mathbb{R}H_{s,s}; \mathbb{Z}/2) \cong \mathbb{Z}/2[a,b]/\langle a^{s+1}, b^s+ab^{s-1}+\cdots+a^s \rangle,$$ where $a$ and $b$ are homogeneous elements of degree one each. By the K\"{u}nneth formula $$H_1(\mathbb{R}P^s \times \mathbb{R}P^s) \cong H_1(\mathbb{R}P^s) \oplus H_1(\mathbb{R}P^s).$$ Let $\sigma=(\sigma_1,\sigma_2):\Delta^1 \to \mathbb{R}P^s \times \mathbb{R}P^s$ be a singular 1-simplex and $$A: \mathbb{R}P^s \times \mathbb{R}P^s \to \mathbb{R}P^s \times \mathbb{R}P^s$$ denote the same involution. Then $A_{*}\big([\sigma]\big)=[A \circ \sigma]=[(\sigma_2,\sigma_1)]$. This shows that if $a_1$ and $b_1$ are generators of $H_1(\mathbb{R}P^s) \oplus H_1(\mathbb{R}P^s)$, then $A_{*}(a_1)=b_1$ and hence the induced action on $H_1(\mathbb{R}P^s \times \mathbb{R}P^s)$ is non-trivial.

Further we have $H^1(\mathbb{R}P^s \times \mathbb{R}P^s) \cong H^1(\mathbb{R}P^s) \oplus H^1(\mathbb{R}P^s)$. If $f \in H^1(\mathbb{R}P^s \times \mathbb{R}P^s)$, then $A^*(f)= f \circ A_{*}$. In particular, $A^*(f)(a_1)=  f \circ A_{*}(a_1)= f(b_1)$. Choosing $f$ such that $f(a_1)\neq f(b_1)$, we see that $A^*$ acts non-trivially on cohomology. Thus if $a_2$ and $b_2$ are generators of $H^1(\mathbb{R}P^s) \oplus H^1(\mathbb{R}P^s)$, then $A^{*}(a_2)=b_2$.

Recall that the real Milnor manifold is also given by the fiber bundle
$$\mathbb{R}P^{s-1} \stackrel{i}{\hookrightarrow} \mathbb{R}H_{s,s} \stackrel{\pi}{\longrightarrow} \mathbb{R}P^{s}.$$ Let $\mathbb{R}H_{s,s} \stackrel{j_1}{\hookrightarrow} \mathbb{R}P^s \times \mathbb{R}P^s$ be the canonical inclusion and
$$\mathbb{R}P^s \stackrel{j_2}{\hookrightarrow} \mathbb{R}P^s \times \mathbb{R}P^s  \stackrel{pr_1}{\longrightarrow} \mathbb{R}P^s$$
be the trivial fiber bundle, where $j_2$ is inclusion on to the second factor and $pr_1$ is projection on to the first factor. Then we have the following commutative diagram
$$
\xymatrix{
H^1(\mathbb{R}P^s \times \mathbb{R}P^s) \ar[d]^{j_1^*} & H^1(\mathbb{R}P^s) \ar[l]^{\hspace*{5mm}pr_1^*} \ar[ld]^{\hspace*{3mm}\pi^*}\\
H^1(\mathbb{R}H_{s,s}).& }
$$
Applying theorem \ref{thm4.1} to the fiber bundles given by $\pi$ and $pr_1$ and using theorem \ref{thm4.2},  we get $\pi^*(a_2)=a$ and $pr_1^*(a_2)=a_2$. Further by commutativity of the diagram, we get $j_1^*(a_2)=j_1^*\big(pr_1^*(a_2)\big)=\pi^*(a_2)=a$.

Let $\mathbb{R}P^{s-1} \stackrel{j}{\hookrightarrow} \mathbb{R}P^s$ be the canonical inclusion. Then we have the following  commutative diagram
$$
\xymatrix{
H^1(\mathbb{R}P^s \times \mathbb{R}P^s) \ar[r]^{A^*} \ar[d]^{j_1^*} & H^1(\mathbb{R}P^s \times \mathbb{R}P^s) \ar[d]^{j_1^*} \ar[r]^{j_2^*} & H^1(\mathbb{R}P^{s}) \ar[d]^{j^*}\\
H^1(\mathbb{R}H_{s,s}) \ar[r]^{A^*} & H^1(\mathbb{R}H_{s,s})  \ar[r]^{i^*} & H^1(\mathbb{R}P^{s-1}).}
$$
If $b_3 \in H^1(\mathbb{R}P^{s-1})$ is the generator, then $j^*(b_2)=b_3$. Again by theorems \ref{thm4.1} and \ref{thm4.2}, we get $i^*(b)=b_3$ and $j_2^*(b_2)=b_2$. The commutativity of the right square shows that $j_1^*(b_2)=b$. The commutativity of the left square gives $A^*\big(j_1^*(a_2)\big)=j_1^*\big(A^*(a_2)\big)$. This implies $A^*(a)= j_1^*(b_2)=b$. Thus the induced map $A^*$ is non-trivial.
\end{proof}

We have a similar result for the complex case.

\begin{proposition}\label{prop5.2}
Let $B: \mathbb{C}H_{s,s} \longrightarrow \mathbb{C}H_{s,s}$ be the free involution given by
$$B\big([z_0,\dots,z_s], [w_0,\dots,w_s]\big) \longmapsto \big([w_0,\dots,w_s], [z_0,\dots,z_s]\big).$$
Then $B^*: H^*(\mathbb{C}H_{s,s}) \longrightarrow H^*(\mathbb{C}H_{s,s})$ is non-trivial.
\end{proposition}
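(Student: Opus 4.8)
The plan is to follow the proof of Proposition~\ref{prop5.1} essentially line for line, replacing the degree-one classes used there by degree-two classes, since by Lemma~\ref{lem2.1}(2) the generators $g$ and $h$ of $H^*(\mathbb{C}H_{s,s})$ both sit in degree two. First I would record the cohomology algebra
$$H^*(\mathbb{C}H_{s,s}) \cong \mathbb{Z}/2[g,h]/\langle g^{s+1},\, h^s + gh^{s-1} + \cdots + g^s \rangle$$
and observe that, because $\mathbb{C}P^s$ has vanishing cohomology in odd degrees, the K\"{u}nneth formula gives a clean splitting $H^2(\mathbb{C}P^s \times \mathbb{C}P^s) \cong H^2(\mathbb{C}P^s) \oplus H^2(\mathbb{C}P^s)$ with no middle cross term. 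Let $g_2$ and $h_2$ denote the generators pulled back from the first and second factors respectively.

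Next I would analyze the swap involution $B$ on $H^*(\mathbb{C}P^s \times \mathbb{C}P^s)$. Exactly as in the real case, a singular $2$-simplex $\sigma=(\sigma_1,\sigma_2)$ is carried by $B$ to $(\sigma_2,\sigma_1)$, so over $\mathbb{Z}/2$ the induced map $B_*$ interchanges the two K\"{u}nneth summands of $H_2(\mathbb{C}P^s \times \mathbb{C}P^s)$; dually $B^*(g_2)=h_2$.

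The heart of the argument is then to transport this across the bundle $\mathbb{C}P^{s-1} \stackrel{i}{\hookrightarrow} \mathbb{C}H_{s,s} \stackrel{\pi}{\longrightarrow} \mathbb{C}P^s$ together with the canonical inclusion $j_1 : \mathbb{C}H_{s,s} \hookrightarrow \mathbb{C}P^s \times \mathbb{C}P^s$, the trivial bundle $pr_1$, and the inclusion $j_2$ onto the second factor. Applying Theorems~\ref{thm4.1} and~\ref{thm4.2} to this bundle, the relevant edge homomorphisms are especially transparent here: since $H^{\textrm{odd}}(\mathbb{C}P^{s-1})=0$ and $H^{\textrm{odd}}(\mathbb{C}P^s)=0$, the entries $E_2^{0,1}$, $E_2^{2,1}$ and $E_2^{3,0}$ all vanish, so no differential can touch the base class $g_2$ or the fiber generator. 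Hence $\pi^*(g_2)=g$ and $i^*(h)=h_3$, where $h_3$ generates $H^2(\mathbb{C}P^{s-1})$, and the same two commutative squares as in the proof of Proposition~\ref{prop5.1} yield $j_1^*(g_2)=g$ and $j_1^*(h_2)=h$.

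Finally, naturality of $j_1^*$ with respect to $B$ closes the argument: commutativity gives
$$B^*(g) = B^*\big(j_1^*(g_2)\big) = j_1^*\big(B^*(g_2)\big) = j_1^*(h_2) = h,$$
so $B^*$ is non-trivial. I expect the only genuine obstacle to be the spectral-sequence bookkeeping, namely confirming that $g$ and $h$ really are identified with $\pi^*(g_2)$ and the lift of the fiber generator $h_3$; but the vanishing of the odd-degree cohomology of both base and fiber removes every potentially interfering differential, so this verification is in fact simpler than its degree-one counterpart in the real case.
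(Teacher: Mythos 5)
Your proposal is correct and is exactly what the paper intends: the paper's own ``proof'' of Proposition~\ref{prop5.2} simply says it is similar to the real case and leaves it to the reader, and you have carried out that adaptation faithfully---Künneth splitting of $H^2(\mathbb{C}P^s\times\mathbb{C}P^s)$, the swap $B^*(g_2)=h_2$, the edge-homomorphism identifications $j_1^*(g_2)=g$ and $j_1^*(h_2)=h$, and naturality giving $B^*(g)=h$. Your observation that the vanishing of odd-degree cohomology of base and fiber kills all interfering differentials (and makes the coefficient system automatically simple, since $\mathbb{C}P^s$ is simply connected) is a correct and even slightly cleaner justification of the edge-homomorphism step than the degree-one argument it mirrors.
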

\begin{proof}
The proof is similar to that of the real case and left to the reader.
\end{proof}

Note that using propositions \ref{prop5.1} and \ref{prop5.2}, we can construct free actions of  $(\mathbb{Z}/2)^n$ on $\prod_{i=1}^n \mathbb{R}H_{r_i,s_i}$ and $\prod_{i=1}^n \mathbb{C}H_{r_i,s_i}$ for $1 \leq s_i = r_i$ and $n \geq 2$, such that the induced action on mod 2 cohomology is non-trivial.

\section{Proofs of theorems}
\subsection{The real case}
Let $G= (\mathbb{Z}/2)^r$ act freely on a finite dimensional CW-complex $X\simeq_2 \prod_{i=1}^n \mathbb{R}H_{r_i,s_i}$ with $1 \leq s_i \leq r_i$ for each $1 \leq i \leq n$. Further, assume that the induced action on mod 2 cohomology of $X$ is trivial. Using the K\"{u}nneth formula and lemma \ref{lem2.1}, we get
$$H^*(X) \cong \mathbb{Z}/2[a_1,b_1,\dots, a_n, b_n]/I,$$
where $$I=\langle a_1^{s_1+1}, b_1^{r_1}+a_1b_1^{r_1-1}+ \cdots+a_1^{s_1}b_1^{r_1-s_1}, \dots,a_n^{s_n+1}, b_n^{r_n}+a_nb_n^{r_n-1}+ \cdots+a_n^{s_n}b_n^{r_n-s_n}\rangle$$
and $a_1,b_1,\dots,a_n,b_n$ are homogeneous elements of degree one each. Consider the Leray-Serre spectral sequence associated to the Borel fibration
$$X\hookrightarrow X_G \longrightarrow B_G.$$
Then we have $$E_2^{k,l} \cong E_2^{k,0} \otimes E_2^{0,l},$$ where $E_2^{k,0}=H^k(B_G)$ and $E_2^{0,l}=H^0(B_G; \mathcal{H}^l(X))=H^l(X)^G=H^l(X)$ as the induced action on cohomology is trivial. Thus we have
\begin{eqnarray}
E_2^{*,*} & \cong & H^*(B_G) \otimes H^*(X)\nonumber\\
& \cong & \mathbb{Z}/2[\alpha_1,\dots, \alpha_r] \otimes \mathbb{Z}/2[a_1,b_1,\dots, a_n, b_n]/I. \nonumber
\end{eqnarray}

By theorem \ref{thm4.3}, the spectral sequence does not degenerate at the $E_2$ term and hence $d_2:E_2^{0,1} \to E_2^{2,0}$ is non-zero. Let $d_2(1 \otimes a_i)= u_i\otimes 1$ and $d_2(1 \otimes b_i)=v_i\otimes 1$ for $1 \leq i \leq n$ with at least one of them being non-zero. Notice that $d_2$ is completely determined by $d_2(1 \otimes a_i)$ and $d_2(1 \otimes b_i)$ as it is a derivation. Consider the ideal 
$$J=\langle u_1,v_1,\dots,u_n,v_n \rangle$$
in $H^*(B_G)$. Then, using arguments as in \cite{Cusick1}, we have the following lemma.

\begin{lemma}\label{lem6.1}
Let $G=(\mathbb{Z}/2)^r$ act freely on a finite dimensional CW-complex $X\simeq_2 \prod_{i=1}^n \mathbb{R}H_{r_i,s_i}$ with $1 \leq s_i \leq r_i$ for each $1 \leq i \leq n$. Further, assume that the induced action on mod $2$ cohomology of $X$ is trivial and $s_i,r_i \not\equiv 3 \mod 4$ for each $1 \leq i \leq n$. Then the ideal $J$ in $H^*(B_G)$ is invariant under the action of the Steenrod algebra.
\end{lemma}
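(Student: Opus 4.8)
The plan is to verify the defining property directly, i.e.\ that $\mathrm{Sq}^k(u_i)$ and $\mathrm{Sq}^k(v_i)$ lie in $J$ for all $i$ and all $k$. Since $J$ is an ideal and the Steenrod algebra acts through the Cartan formula, it suffices to check this on the generators $u_i,v_i$. These have degree $2$, so $\mathrm{Sq}^0$ returns the element itself, $\mathrm{Sq}^2(u_i)=u_i^2\in J$, and $\mathrm{Sq}^k=0$ for $k>2$; the only case that needs work is $\mathrm{Sq}^1$. Thus the lemma reduces to showing $\mathrm{Sq}^1(u_i)\in J$ and $\mathrm{Sq}^1(v_i)\in J$ for each $i$.

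Next I would reinterpret these conditions inside the spectral sequence. Since $u_i=d_2(1\otimes a_i)$ and $v_i=d_2(1\otimes b_i)$ are the transgressions of the degree-one fibre classes $a_i$ and $b_i$, the Kudo transgression theorem (transgression commutes with the Steenrod squares over $\mathbb{Z}/2$) identifies $\mathrm{Sq}^1(u_i)$ with the $d_3$-transgression of $\mathrm{Sq}^1(a_i)=a_i^2$, and $\mathrm{Sq}^1(v_i)$ with that of $b_i^2$, computed modulo the image of $d_2$ into the bottom row. That image is exactly the relevant graded piece of $J$, so $E_3^{*,0}\cong H^*(B_G)/J$, and the desired membership becomes the clean statement
\[
\mathrm{Sq}^1(u_i)\in J \iff d_3(a_i^2)=0, \qquad \mathrm{Sq}^1(v_i)\in J \iff d_3(b_i^2)=0 .
\]
Note that $a_i^2$ and $b_i^2$ survive to $E_3$, since $d_2(a_i^2)=2a_iu_i=0=d_2(b_i^2)$.

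To kill these $d_3$-differentials I would feed in the algebra relations of Lemma \ref{lem2.1}. When $s_i$ is even, the derivation property applied to $(1\otimes a_i)^{s_i+1}=0$ at the $E_2$-page already forces $u_i=0$ (the coefficient $s_i+1$ is odd), so $\mathrm{Sq}^1(u_i)=0$ automatically. When $s_i$ is odd, the hypothesis $s_i\not\equiv3\pmod4$ gives $s_i\equiv1\pmod4$; writing $c=a_i^2$ and $m=(s_i+1)/2$, the relation $a_i^{s_i+1}=0$ reads $c^m=0$ in $E_3$ with $m$ odd, and the derivation property of $d_3$ yields $0=d_3(c^m)=m\,c^{m-1}d_3(c)=a_i^{s_i-1}\,\mathrm{Sq}^1(u_i)$ in $E_3$. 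Provided multiplication by the nonzero class $a_i^{s_i-1}$ is injective on the bottom row, this forces $d_3(a_i^2)=0$, i.e.\ $\mathrm{Sq}^1(u_i)\in J$. The class $v_i$ and the element $b_i^2$ would be handled in the same spirit, but using the mixed relation $R_i=\sum_{k=0}^{s_i}a_i^kb_i^{r_i-k}=0$ together with $r_i\not\equiv3\pmod4$.

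I expect two points to be the real obstacles. First, to pass from $a_i^{s_i-1}\,\mathrm{Sq}^1(u_i)=0$ to $\mathrm{Sq}^1(u_i)=0$ in $E_3^{*,0}$ one must establish that $E_3$ is free as a module over its bottom row $H^*(B_G)/J$, with the surviving even fibre monomials as a basis, so that $a_i^{s_i-1}$ is a non-zero-divisor in the relevant degree; this is precisely where the exclusion of the residue $3\bmod4$ is essential, since for $s_i\equiv3$ the coefficient $m$ is even and the vanishing argument collapses. Second, and more seriously, the $b_i$ (hence $v_i$) case is not governed by a pure power relation but by the mixed relation $R_i=0$; extracting from it, after applying $d_2$ and $d_3$, a relation of the form $(\text{odd})\cdot b_i^{r_i-1}\,\mathrm{Sq}^1(v_i)\in J$ with all the $a_i$-terms controlled is the crux of the lemma, and is where the bulk of the computation — and the careful use of $r_i\not\equiv3\pmod4$ — will go.
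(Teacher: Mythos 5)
Your skeleton---reduction to $Sq^1$ via the Cartan formula, the Kudo transgression theorem, the derivation property applied to power relations, and the even-exponent case forcing $u_i=0$---is exactly the paper's approach, and your $u_i$-half is essentially the paper's argument. But the proposal has two genuine gaps, and in both the resolution is not the one you anticipate. The serious one is the $v_i$ case, which you explicitly leave open, predicting that one must wrestle with the mixed relation $\sum_{k}a_i^k b_i^{r_i-k}=0$ and that this is ``where the bulk of the computation will go.'' The missing idea is a short algebraic identity: multiplying the mixed relation by $b_i$ and substituting it back into itself, using $a_i^{s_i+1}=0$, gives $b_i^{r_i+1}=0$ in $H^*(X)$ (while $b_i^{r_i}\neq 0$, since $s_i\geq 1$). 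This is the opening computation of the paper's proof. Once you have it, $b_i$ is governed by a pure power relation and is in exactly the same position as $a_i$, with exponent $r_i+1$ in place of $s_i+1$: if $r_i$ is even the derivation property gives $v_i\otimes b_i^{r_i}=0$, hence $v_i=0$; if $r_i=4m+1$ then $0=d_3(1\otimes b_i^{4m+2})=(1\otimes b_i^{4m})\,d_3(1\otimes b_i^2)$ because the coefficient $2m+1$ is odd; and $r_i\equiv 3 \bmod 4$ is excluded precisely because then $(r_i+1)/2$ is even and the identity is vacuous. So the ``crux'' you isolate is a five-line identity, not a new computation; without it your proposal does not prove the lemma.

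The second gap is that your cancellation step is left conditional, and the fix you propose---that $E_3$ be free as a module over its bottom row $H^*(B_G)/J$---is stronger than needed, not established, and not how the argument should go. The paper cancels at the level of $E_2$, where freeness is obvious: the vanishing of $(1\otimes b_i^{4m})\,d_3(1\otimes b_i^2)$ in $E_3^{3,4m}$ says precisely that the $E_2$-representative $Sq^1(v_i\otimes 1)\cdot(1\otimes b_i^{4m})$ lies in $\mathrm{im}\big(d_2:E_2^{1,4m+1}\to E_2^{3,4m}\big)$. By the derivation property, every element of that image is a sum of terms $\alpha_j u_k\otimes(\,\cdot\,)$ and $\alpha_j v_k\otimes(\,\cdot\,)$, so its coefficient on the basis monomial $b_i^{4m}$ of $H^{4m}(X)$ lies in the degree-three part of $J$; comparing coefficients in the free module $E_2^{3,4m}=H^3(B_G)\otimes H^{4m}(X)$ (this is the injectivity of $-\otimes b_i^{4m}$ on $E_2^{*,0}$, which is immediate) yields $Sq^1(v_i)\in J$ outright. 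In other words, the injectivity you need holds in the specific bidegrees and is proved by a projection argument on the free $E_2$-page; no module-theoretic statement about $E_3$ enters at all.
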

\begin{proof}
Fix some $1 \leq i \leq n$. Since $s_i \geq 1$, we have $a_i \neq 0$ and hence $b_i^{r_i} \neq 0$. Notice that
\begin{eqnarray*}
b_i^{r_i+1} & = & (a_ib_i^{r_i-1}+\cdots+a_i^{s_i}b_i^{r_i-s_i})b_i\nonumber\\
& = & a_ib_i^{r_i}+\cdots+a_i^{s_i}b_i^{r_i-s_i+1}\nonumber\\
& = & a_i(a_ib_i^{r_i-1}+\cdots+a_i^{s_i}b_i^{r_i-s_i})+a_i^2b_i^{r_i-1}+\cdots+a_i^{s_i}b_i^{r_i-s_i+1}\nonumber\\
& = & a_i^2b_i^{r_i-1}+\cdots+a_i^{s_i}b_i^{r_i-s_i+1}+a_i^{s_i+1}b_i^{r_i-s_i}+a_i^2b_i^{r_i-1}+\cdots+a_i^{s_i}b_i^{r_i-s_i+1}\nonumber\\
& = & 0 \nonumber
\end{eqnarray*}
If $r_i$ is even, then
$$0=d_2(1 \otimes b_i^{r_i+1})=(1 \otimes b_i^{r_i})d_2(1 \otimes b_i)=v_i \otimes b_i^{r_i}.$$
Since the map $-\otimes b_i^{r_i}:E_2^{*,0} \to E_2^{*,r_i}$ is injective, we get $v_i \otimes 1=0$. Recall that $a_i^{s_i+1}=0$. Just as above, if $s_i$ is even, then $u_i \otimes 1=0$. Such $u_i$ and $v_i$ are obviously invariant under the action of the Steenrod algebra.

Next let $r_i=4m+1$ and $v_i \otimes 1 \neq 0$. Notice that $Sq^1(1 \otimes b_i)= 1 \otimes b_i^2$ and $d_2(1 \otimes b_i^{2m})=0$ by the derivation property of $d_2$. Thus $1 \otimes b_i^{2m}$ represents an element in $E_3^{0,2m}$ and we have
\begin{equation}\label{eq1}
0=d_3(1 \otimes b_i^{4m+2})= (1\otimes b_i^{4m})d_3(1 \otimes b_i^2)~\textrm{in}~ E_3^{3,4m}.
\end{equation}
Recall that the transgression operator $d_r:E_r^{0,r-1} \to E_r^{r,0}$ commutes with the Steenrod operations. In other words, the following diagram commute
$$
\xymatrix{
E_2^{0,1} \ar[r]^{d_2} \ar[d]^{Sq^1} & E_2^{2,0} \ar[d]^{Sq^1}\\
E_3^{0,2} \ar[r]^{d_3} & E_3^{3,0}.}
$$
This shows that $d_3(1 \otimes b_i^2)$ is represented by $Sq^1(v_i \otimes 1)$. By equation \eqref{eq1} we have that $(1\otimes b_i^{4m})Sq^1(v_i\otimes 1)$ lies in the image of $d_2:E_2^{1,4m+1} \to E_2^{3,4m}$ and hence 
$$(1\otimes b_i^{4m})Sq^1(v_i \otimes 1)=(1\otimes b_i^{4m})d_2(w)$$
for some $w \in E_2^{1,1}$. Since $1\otimes b_i^{4m} \neq 0$, the map $-\otimes b_i^{4m}:E_2^{*,0} \to E_2^{*,4m}$ is injective, we get $Sq^1(v_i \otimes 1)=d_2(w)$. Let 
$$w= \sum_{j,k} \lambda_{j,k} (\alpha_j  \otimes a_k)+ \sum_{j,k}\mu_{j,k} (\alpha_j \otimes b_k),$$
where $\lambda_{j,k},\mu_{j,k} \in \mathbb{Z}/2$. Then
$$d_2(w)=\sum_{j,k} \lambda_{j,k} (\alpha_ju_k  \otimes 1)+ \sum_{j,k} \mu_{j,k}(\alpha_j v_k \otimes 1) \in J.$$
This shows that $Sq^1(v_i \otimes 1)\in J$. Similarly, if $s_i=4m+1$ and $u_i \otimes 1 \neq 0$, then $Sq^1(u_i \otimes 1) \in J$. Hence $J$ is invariant under the action of the Steenrod algebra.
\end{proof}

For an integer $n$, define
\begin{displaymath}
\eta(n) = \left\{ \begin{array}{ll}
0 & \textrm{if $n$ is even}\\
1 & \textrm{if $n$ is odd.}
\end{array} \right.
\end{displaymath}

\subsection*{Proof of Theorem \ref{thm1.1}}
We first prove (1). As noticed in the proof of lemma \ref{lem6.1}, if some $s_i, r_i$ is even, then the corresponding $u_i, v_i$ is zero. Regard $u_1,v_1,\dots,u_n,v_n \in H^2(B_G)$ as homogeneous polynomials of degree two in $r$ variables. Suppose that they have a non-trivial common zero in $(\mathbb{Z}/2)^r$. Then by proposition \ref{prop4.5}, there is a subgroup inclusion
$$j:\mathbb{Z}/2 \hookrightarrow (\mathbb{Z}/2)^r$$
such that $j^*(u_i)=0=j^*(v_i)$ for each $1 \leq i \leq n$. Restrict the $G$ action on $X$ to $\mathbb{Z}/2$ action on $X$, and consider the Leray-Serre spectral sequence $\{\overline{E}_r^{*,*},\overline{d}_r \}$ associated to the Borel fibration 
$$X \hookrightarrow X_{\mathbb{Z}/2} \longrightarrow B_{\mathbb{Z}/2}.$$ The naturality of the Leray-Serre spectral sequence gives the following commutative diagram
\begin{equation}\label{eq2}
\xymatrix{
E_r^{k,l} \ar[d]^{d_r} \ar[r]^{j^*} & \overline{E}_r^{k,l} \ar[d]^{\overline{d}_r}\\
E_r^{k+r,l-r+1} \ar[r]^{j^*} & \overline{E}_r^{k+r,l-r+1}.}
\end{equation}
Observe that $j^*:E_2^{0,l} \to \overline{E}_2^{0,l}$ is the identity map. This together with the commutative diagram gives $\overline{d}_2(1 \otimes a_i)=0=\overline{d}_2(1 \otimes b_i)$ for each $1 \leq i \leq n$. Hence $\overline{d}_r=0$ for each $r \geq 2$ and 
$$\overline{E}_2^{*,*}=\overline{E}_{\infty}^{*,*}.$$
But we have $H^*(X_G) \cong  \textrm{Tot}E_{\infty}^{*,*}$. This implies that $H^*(X_G)$ is infinite dimensional, a contradiction by proposition \ref{prop4.4}. Hence the system of homogeneous polynomials do not have any non-trivial common zero in $(\mathbb{Z}/2)^r$. Thus by proposition \ref{prop4.7}, $r \leq 2\big(\eta(s_1)+\eta(r_1)+\cdots+\eta(s_n)+\eta(r_n)\big)$.

Next we prove (2). By lemma \ref{lem6.1}, the ideal $J$ is invariant under the action of the Steenrod algebra. By above discussion, the system of homogeneous polynomials $u_1,v_1,\dots,u_n,v_n$ do not have any non-trivial common zero in $(\mathbb{Z}/2)^r$. Thus by proposition \ref{prop4.6}, we have $r \leq (\eta(s_1)+\eta(r_1)+\cdots+\eta(s_n)+\eta(r_n)\big)$. This completes the proof of theorem \ref{thm1.1}. \hfill $\Box$

\subsection{The complex case}
Let $G= (\mathbb{Z}/2)^r$ act freely on a finite dimensional CW-complex $X\simeq_2 \prod_{i=1}^n \mathbb{C}H_{r_i,s_i}$ with $1 \leq s_i \leq r_i$ for each $1 \leq i \leq n$. And suppose that the induced action on mod 2 cohomology of $X$ is trivial. Using lemma \ref{lem2.1}, we get
$$H^*(X) \cong \mathbb{Z}/2[g_1,h_1,\dots,g_n,h_n]/K,$$
where $$K=\langle g_1^{s_1+1},h_1^{r_1}+g_1h_1^{r_1-1}+\cdots+g_1^{s_1}h_1^{r_1-s_1},\dots,g_n^{s_n+1}, h_n^{r_n}+g_nh_n^{r_n-1}+\cdots+g_n^{s_n}h_n^{r_n-s_n}\rangle$$
and $g_1,h_1,\dots,g_n,h_n$ are all homogeneous elements of degree two each. As in the real case, we have
$$E_2^{*,*} \cong \mathbb{Z}/2[\alpha_1,\dots, \alpha_r] \otimes \mathbb{Z}/2[g_1,h_1,\dots,g_n,h_n]/K.$$

Again by theorem \ref{thm4.3}, the spectral sequence does not degenerate at the $E_2$ term. Notice that $d_r=0$ for all even $r$. In particular $d_2=0$ and hence $d_3:E_3^{0,2} \to E_3^{3,0}$ must be non-zero. Let $d_3(1 \otimes g_i)= x_i\otimes 1$ and $d_3(1 \otimes h_i)=y_i\otimes 1$ for $1 \leq i \leq n$ with at least one of them being non-zero. Consider the ideal 
$$L=\langle x_1,y_1,\dots,x_n,y_n \rangle$$
in $H^*(B_G)$. Then we have the following lemma.

\begin{lemma}\label{lem6.2}
Let $G=(\mathbb{Z}/2)^r$ act freely on a finite dimensional CW-complex $X\simeq_2 \prod_{i=1}^n \mathbb{C}H_{r_i,s_i}$ with $1 \leq s_i \leq r_i$ for each $1 \leq i \leq n$. Suppose that the induced action on mod $2$ cohomology of $X$ is trivial and $s_i,r_i \not\equiv 3 \mod 4$ for each $1 \leq i \leq n$. Then the ideal $L$ in $H^*(B_G)$ is invariant under the action of the Steenrod algebra.
\end{lemma}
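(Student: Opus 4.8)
The plan is to follow the outline of Lemma~\ref{lem6.1}, substituting the transgression $d_2$ by $d_3$ and the operation $Sq^1$ by $Sq^2$, the modifications being forced by $g_i,h_i$ now having degree two. Because $H^*(X)$ is concentrated in even degrees, $E_2^{k,l}=0$ for odd $l$, and hence $d_r=0$ for every even $r$; in particular $d_2=0$, and the first non-zero differential is the transgression $d_3$, determined as a derivation by $d_3(1\otimes g_i)=x_i\otimes 1$ and $d_3(1\otimes h_i)=y_i\otimes 1$ with $x_i,y_i\in H^3(B_G)$.

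First I would record $g_i^{s_i+1}=0$ together with $h_i^{r_i+1}=0$, the latter by the same computation as in Lemma~\ref{lem6.1}. When $r_i$ is even, applying the derivation $d_3$ to the vanishing class $1\otimes h_i^{r_i+1}$ (the exponent $r_i+1$ being odd, the Leibniz coefficient is non-zero mod $2$) gives $y_i\otimes h_i^{r_i}=0$ in $E_3^{3,2r_i}$; since $s_i\ge 1$ forces $h_i^{r_i}\neq 0$, injectivity of $-\otimes h_i^{r_i}$ yields $y_i=0$, and symmetrically $x_i=0$ when $s_i$ is even. Such zero generators are trivially Steenrod invariant. The action of $Sq^1$ is also immediate: since $H^*(X)$ vanishes in odd degrees, $Sq^1=0$ on $H^*(X)$, so $Sq^1 h_i=0=Sq^1 g_i$; because the transgression commutes with $Sq^1$ (as recalled in Lemma~\ref{lem6.1}), the element $Sq^1(y_i)$ is the transgression of $Sq^1 h_i=0$ and therefore lies in the image of $d_3: E_3^{1,2}\to E_3^{4,0}$. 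As $d_3$ is a derivation, this image is spanned by the $\alpha_j x_k$ and $\alpha_j y_k$, which lie in $L$, so $Sq^1(x_i),Sq^1(y_i)\in L$.

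The substantial step is the action of $Sq^2$, where the hypothesis $r_i,s_i\not\equiv 3\bmod 4$ enters. Take $r_i=4m+1$. Here $Sq^2 h_i=h_i^2$, and $1\otimes h_i^2$ survives to $E_5^{0,4}$ because $d_3(1\otimes h_i^2)=0$ and $d_4=0$; exactly as the identity $d_3(1\otimes b_i^2)=Sq^1(v_i)\otimes 1$ was used in Lemma~\ref{lem6.1}, the commutation of the transgression with the Steenrod squares now gives $d_5(1\otimes h_i^2)=Sq^2(y_i)\otimes 1$. Writing $h_i^{r_i+1}=(h_i^2)^{2m+1}$ with $2m+1$ odd and applying the derivation $d_5$ to this vanishing class produces
$$0=(1\otimes h_i^{4m})\,d_5(1\otimes h_i^2)=Sq^2(y_i)\otimes h_i^{4m}\qquad\textrm{in } E_5^{5,8m}.$$
Because $d_2=d_4=0$, the term $E_5^{5,8m}$ equals the $d_3$-cohomology at bidegree $(5,8m)$; as our class is a $d_3$-cycle, its vanishing there yields $Sq^2(y_i)\otimes h_i^{4m}=d_3(\xi)$ for some $\xi\in H^2(B_G)\otimes H^{8m+2}(X)$. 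Since $d_3$ is a derivation sending $1\otimes g_l$ and $1\otimes h_l$ to $x_l\otimes 1$ and $y_l\otimes 1$, its image lies in $L\otimes H^*(X)$; comparing the coefficient of the non-zero monomial $h_i^{4m}$ then forces $Sq^2(y_i)\in L$. The identical computation with $g_i,s_i$ in place of $h_i,r_i$ gives $Sq^2(x_i)\in L$ when $s_i\equiv 1\bmod 4$.

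To conclude, the generators $x_i,y_i$ have degree three, so $Sq^k$ annihilates them for $k>3$ while $Sq^3$ sends them to their squares, which lie in $L$; combined with $Sq^0,Sq^1,Sq^2$ carrying them into $L$, every $Sq^k$ maps each generator into $L$, and the Cartan formula extends this to all of $L$, giving invariance. I expect the one genuine obstacle to be the $Sq^2$ step: it requires invoking the transgression theorem correctly at the fifth page and using the vanishing of $d_2$ and $d_4$---a consequence of the even-degree concentration of $H^*(X)$---to reduce the relation in $E_5$ to a single $d_3$-boundary, exactly paralleling the passage from $d_2$ to $d_3$ in the real case.
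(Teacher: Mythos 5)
Your proof is correct and follows essentially the same route as the paper's: vanishing of $x_i,y_i$ for even $s_i,r_i$ via the Leibniz rule applied to $1\otimes h_i^{r_i+1}=0$, then commutation of the transgression with Steenrod squares ($Sq^1$ through $d_4=0$, and $Sq^2$ through $d_5$ combined with $h_i^{4m+2}=0$) to place $Sq^1(y_i)$ and $Sq^2(y_i)$ in the image of $d_3$, hence in $L$. If anything, your write-up is slightly more careful than the paper's at two points: you justify the passage from vanishing in $E_5^{5,8m}$ to $Sq^2(y_i)\in L$ by comparing coefficients of the basis monomial $h_i^{4m}$ (the paper instead asserts the $d_3$-preimage can be taken of the special form $(1\otimes h_i^{4m})z$ with $z\in E_3^{2,2}$), and you make explicit the reduction, via the Cartan formula and the unstable axiom $Sq^3(y_i)=y_i^2$, to checking only $Sq^1$ and $Sq^2$ on the degree-three generators.
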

\begin{proof}
We describe the proof briefly as it is similar to the proof of lemma 6.1. Fix some $1 \leq i \leq n$. Since $s_i \geq 1$, we have $h_i^{r_i} \neq 0$. Notice that $h_i^{r_i+1}=0$. If $r_i$ is even, then $$0=d_3(1 \otimes h_i^{r_i+1})=y_i \otimes h_i^{r_i}.$$ Since the map $-\otimes h_i^{r_i}:E_3^{*,0} \to E_3^{*,r_i}$ is injective, we get $y_i \otimes 1=0$. Since $g_i^{s_i+1}=0$, it follows that if $s_i$ is even, then $x_i \otimes 1=0$. Such $x_i$ and $y_i$ are obviously invariant under the action of the Steenrod algebra.

Let $r_i=4m+1$ and $y_i \otimes 1 \neq 0$. Notice that $Sq^1(1 \otimes h_i)=0$ and the following diagram commute
$$
\xymatrix{
E_3^{0,2} \ar[r]^{d_3} \ar[d]^{Sq^1} & E_3^{3,0} \ar[d]^{Sq^1}\\
E_4^{0,3} \ar[r]^{d_4} & E_4^{4,0}.}
$$
Since $d_4=0$, the commutativity of the diagram shows that $0=Sq^1(y_i \otimes 1) \in L$.

Next we have $Sq^2(1 \otimes h_i)=1 \otimes h_i^2$. Further $d_3(1 \otimes h_i^{2m})=0$ and hence $1 \otimes h_i^{2m}$ represents an element in $E_5^{0,4m}$. Since $h_i^{4m+2}=0$, we have
\begin{equation}\label{eq3}
0=d_5(1 \otimes h_i^{4m+2})= (1\otimes h_i^{4m})d_5(1 \otimes h_i^2)~\textrm{in}~ E_5^{5,4m}.
\end{equation}
Consider the following commutative diagram
$$
\xymatrix{
E_3^{0,2} \ar[r]^{d_3} \ar[d]^{Sq^2} & E_3^{3,0} \ar[d]^{Sq^2}\\
E_5^{0,4} \ar[r]^{d_5} & E_5^{5,0}.}
$$
This shows that $d_5(1 \otimes h_i^2)$ is represented by $Sq^2(y_i \otimes 1)$. By equation \eqref{eq3} we have that $(1\otimes h_i^{4m})Sq^2(y_i\otimes 1)$ lies in the image of $d_3:E_3^{2,4m+2} \to E_3^{5,4m}$ and hence 
$$(1\otimes h_i^{4m})Sq^2(y_i \otimes 1)=(1\otimes h_i^{4m})d_3(z)$$
for some $z \in E_3^{2,2}$. Since $1\otimes h_i^{4m} \neq 0$, the map $-\otimes h_i^{4m}:E_3^{*,0} \to E_3^{*,4m}$ is injective, we get $Sq^2(y_i \otimes 1)=d_3(z)$. Let
$$z= \sum_{j,k,l}\lambda_{j,k,l}(\alpha_j\alpha_k \otimes g_l)+ \sum_{j,k,l}\mu_{j,k,l}(\alpha_j\alpha_k \otimes h_l),$$ where $\lambda_{j,k,l}, \mu_{j,k,l} \in \mathbb{Z}/2.$ Then
$$d_3(z)=\sum_{j,k,l}\lambda_{j,k,l}(\alpha_j\alpha_kx_l \otimes 1)+ \sum_{j,k,l}\mu_{j,k,l}(\alpha_j\alpha_ky_l \otimes 1) \in L.$$
This shows that $Sq^2(y_i \otimes 1) \in L$. Similarly, if $s_i=4m+1$ and $x_i \otimes 1 \neq 0$, then $Sq^1(x_i \otimes 1), Sq^2(x_i \otimes 1) \in L$. Hence $L$ is invariant under the action of the Steenrod algebra.
\end{proof}

\subsection*{Proof of Theorem \ref{thm1.2}}
If some $s_i, r_i$ is even, then the corresponding $x_i,y_i$ is zero. Suppose $x_1,y_1,\dots,x_n,y_n$ have a non-trivial common zero in $(\mathbb{Z}/2)^r$. Then by proposition \ref{prop4.5} there is a subgroup inclusion 
$$j:\mathbb{Z}/2 \hookrightarrow (\mathbb{Z}/2)^r$$
such that $j^*(x_i)=0=j^*(y_i)$ for each $1 \leq i \leq n$. Restrict the $G$ action on $X$ to $\mathbb{Z}/2$ action on $X$, and consider the Leray-Serre spectral sequence $\{\overline{E}_r^{*,*},\overline{d}_r \}$ associated to the Borel fibration $$X \hookrightarrow X_{\mathbb{Z}/2} \to B_{\mathbb{Z}/2}.$$ Observe that $j^*:E_3^{0,l} \to \overline{E}_3^{0,l}$ is the identity map. This together with the commutative diagram \eqref{eq2} gives $\overline{d}_3(1 \otimes g_i)=0=\overline{d}_3(1 \otimes h_i)$ for each $1 \leq i \leq n$. Hence $\overline{d}_r=0$ for each $r \geq 2$ and $\overline{E}_2^{*,*}=\overline{E}_{\infty}^{*,*}$. This gives a contradiction by proposition \ref{prop4.4}. Hence the system of homogeneous polynomials do not have any non-trivial common zero in $(\mathbb{Z}/2)^r$. Thus by proposition \ref{prop4.7}, $r \leq 3\big(\eta(s_1)+\eta(r_1)+\cdots+\eta(s_n)+\eta(r_n)\big)$. This proves (1).

By Lemma \ref{lem6.2}, the ideal $L$ is invariant under the action of the Steenrod algebra. By above discussion, the system of homogeneous polynomials $x_1,y_1,\dots,x_n,y_n$ do not have any non-trivial common zero in $(\mathbb{Z}/2)^r$. Thus by proposition \ref{prop4.6}, we have $r \leq (\eta(s_1)+\eta(r_1)+\cdots+\eta(s_n)+\eta(r_n)\big)$. This proves theorem \ref{thm1.2} (2). \hfill $\Box$
\vspace*{2mm}

Restricting to actions of elementary abelian 2-groups on $\prod_{i=1}^n \mathbb{C}H_{r_i,s_i}$ for which the induced action on mod 2 cohomology is trivial,  we obtain the following corollary.

\begin{corollary}
Let $1 \leq s_i < r_i$ for each $1 \leq i \leq n$. Then
$$\frk_2 \Big( \prod_{i=1}^n \mathbb{C}H_{r_i,s_i} \Big)=\eta(s_1)+\eta(r_1)+\cdots+\eta(s_n)+\eta(r_n)$$
whenever $s_i \equiv 1 \mod 4$ and $r_i \equiv 0,2 \mod 4$.
\end{corollary}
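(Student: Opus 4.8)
The plan is to prove the equality by establishing matching upper and lower bounds, where under the stated hypotheses the right-hand side collapses to $n$: since $s_i \equiv 1 \mod 4$ each $s_i$ is odd and $\eta(s_i)=1$, while $r_i \equiv 0,2 \mod 4$ forces each $r_i$ to be even and $\eta(r_i)=0$, so $\sum_{i=1}^n\big(\eta(s_i)+\eta(r_i)\big)=n$. For the upper bound I observe that these congruences in particular give $s_i,r_i \not\equiv 3 \mod 4$, so Theorem \ref{thm1.2}(2) applies verbatim to any free $(\mathbb{Z}/2)^r$ action with trivial induced action on mod $2$ cohomology and yields $r \leq n$; taking the maximum over all such actions gives $\frk_2 \leq n$.

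For the lower bound I would exhibit an explicit free action of $(\mathbb{Z}/2)^n$ with trivial induced action on cohomology. As $s_i$ is odd and $r_i$ is even, the construction of Section \ref{sec3.2.3} supplies, for each $i$, a free involution $B_2^{(i)} : \mathbb{C}H_{r_i,s_i} \to \mathbb{C}H_{r_i,s_i}$ obtained by restricting $T_2 \times T_1$. Letting the $i$-th generator of $(\mathbb{Z}/2)^n$ act by $B_2^{(i)}$ on the $i$-th factor and by the identity on the remaining factors, these pairwise commuting involutions assemble to a $(\mathbb{Z}/2)^n$ action on $\prod_{i=1}^n \mathbb{C}H_{r_i,s_i}$. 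This action is free, because any nonidentity element has a nontrivial component in some slot $i$ and hence moves every point in its $i$-th coordinate.

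It remains to check that this action is trivial on mod $2$ cohomology, and this is the step I expect to require the most care. By the K\"unneth theorem it suffices to show that each $B_2^{(i)}$ acts trivially on $H^*(\mathbb{C}H_{r,s})$. Writing $j : \mathbb{C}H_{r,s} \hookrightarrow \mathbb{C}P^r \times \mathbb{C}P^s$ for the inclusion, naturality gives $B_2^* \circ j^* = j^* \circ (T_2 \times T_1)^*$. Now $H^*(\mathbb{C}P^n;\mathbb{Z}/2)$ is generated in degree two, where it is one-dimensional, so every self-map fixes the generator; hence $T_2^*$ and $T_1^*$, and therefore $(T_2 \times T_1)^*$, are the identity, and $B_2^*$ fixes every class in the image of $j^*$. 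The crux is then to verify, exactly as in the commutative-diagram computation of Proposition \ref{prop5.1} (transported to the complex setting via the fibration $\mathbb{C}P^{r-1} \hookrightarrow \mathbb{C}H_{r,s} \to \mathbb{C}P^s$ and Theorems \ref{thm4.1} and \ref{thm4.2}), that the degree-two generators $g,h$ of $H^*(\mathbb{C}H_{r,s})$ lie in the image of $j^*$; once $j^*$ is known to be surjective, the algebra map $B_2^*$ fixes a generating set and is therefore the identity. This gives the lower bound $\frk_2 \geq n$, which together with the upper bound completes the proof.
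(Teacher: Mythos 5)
Your proposal is correct and follows essentially the same route as the paper: the upper bound comes from Theorem \ref{thm1.2}(2) (applicable since $s_i,r_i \not\equiv 3 \bmod 4$), and the lower bound comes from the product of the free involutions $B_2$ of Section \ref{sec3.2.3}, which realizes a free $(\mathbb{Z}/2)^n$ action with trivial induced action on mod $2$ cohomology. In fact your verification that $B_2^*$ is trivial --- via the one-dimensionality of $H^2(\mathbb{C}P^n;\mathbb{Z}/2)$, which forces the homeomorphisms $T_1,T_2$ to induce the identity, together with surjectivity of $j^*$ as in Proposition \ref{prop5.1} --- spells out a detail the paper merely asserts.
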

\begin{proof}
Theorem 1.2 (2) gives the upper bound. In section \ref{sec3}, we constructed a $\mathbb{Z}/2$ action on $\mathbb{C}H_{r_i,s_i}$ when $s_i$ is odd and $r_i$ is even, for which the induced action on $H^*\big(\mathbb{C}H_{r_i,s_i}\big)$ is trivial. The products of these actions on $\prod_{i=1}^n \mathbb{C}H_{r_i,s_i} $ achieve the desired bound when $s_i \equiv 1 \mod 4$ and $r_i \equiv 0,2 \mod 4$.
\end{proof}

\section{Some concluding remarks}
We conclude with the following remarks on our results. Adem and Yal\c c\i n asked the following question in \cite[p.70]{Adem3}.
\begin{question}
If $(\mathbb{Z}/2)^r$ acts freely on a finite CW-complex $X$ with mod 2 cohomology generated by one dimensional classes, does it follows that $r \leq 2 \dim H_1\big(X; \mathbb{Z}/2\big)$?
\end{question}
Let $(\mathbb{Z}/2)^r$ act freely on a finite CW-complex $X \simeq_2 \prod_{i=1}^n \mathbb{R}H_{r_i,s_i}$ such that the induced action on mod 2 cohomology is trivial. If $1 \leq s_i < r_i$ for each $1\leq i \leq n$, then $$H_1\big(\mathbb{R}H_{r_i,s_i}; \mathbb{Z}/2\big)=\mathbb{Z}/2 \oplus \mathbb{Z}/2$$ and hence $\dim H_1\big(X; \mathbb{Z}/2\big)=2n$. Therefore, by theorem \ref{thm1.1}(1), $$r \leq 2\big(\eta(s_1)+\eta(r_1)+\cdots+\eta(s_n)+\eta(r_n)\big) \leq 2(2n)=2 \dim H_1\big(X; \mathbb{Z}/2\big).$$ Thus the above question has a positive answer for $X \simeq_2 \prod_{i=1}^n \mathbb{R}H_{r_i,s_i}$.

Let $X$ be as in theorems \ref{thm1.1} and \ref{thm1.2}. Let $s_i$ be even and $r_i$ be odd for each $1 \leq i \leq n$. If $X \simeq_2 \prod_{i=1}^n \mathbb{R}H_{r_i,s_i}$, then the Euler characteristic $\chi(X)=1$. Similarly, if $X \simeq_2 \prod_{i=1}^n \mathbb{C}H_{r_i,s_i}$, then $\chi(X)$ is odd. Hence no elementary abelian 2-group can act freely on $X$ and our theorems are weak in this case.

It is well known that, if a closed smooth manifold does not bound mod 2, then it does not admit any free involution. It was shown in \cite{Khare} that, $\mathbb{R}H_{r,s}$ does not bound for $s=2k+1$ and $r=2^{\beta}(2l+1)$ if and only if one of the following holds
\begin{itemize}
\item $\beta \geq 2$ and $k \geq 1$
\item $\beta=1$, $l+1=2^{\delta}(2t+1)$ and $k \geq 2^{\delta +1}-1$.
\end{itemize}
Thus $\mathbb{R}H_{r,s}$ does not admit any free involution in these cases.
\bigskip

\noindent \textbf{Acknowledgement.}  A part of this work was done when the author was visiting the International Centre for Theoretical Physics (ICTP) in Trieste during March to May of 2013. The author is grateful to the ICTP for providing financial support and excellent working atmosphere. The author also thanks the Department of Science and Technology of India for support via the INSPIRE Faculty Scheme IFA-11MA-01/2011 and the SERC Fast Track Scheme SR/FTP/MS-027/2010.

\bibliographystyle{amsplain}

\end{document}